\date{\today}
\def\dbar{\bar\partial}
\def\R{{\mathbf R}}
\def\N{{\mathbf N}}
\def\P{{\mathbf P}}
\def\be{\begin{equation}}
\def\ee{\end{equation}}
\def\Ok{\mathcal O}
\def\1{{\bf 1}}
\newcommand{\ph}{\varphi}
\newcommand{\minus}{\setminus}
\definecolor{darkred}{rgb}{.77, .01,.2}
\DeclareMathSymbol{\lsb@l}{\mathalpha}{letters}{`l}
\newtheorem{thm}{Theorem}[section]
\newtheorem{lma}[thm]{Lemma}
\newtheorem{cor}[thm]{Corollary}
\newtheorem{prop}[thm]{Proposition}
\theoremstyle{definition}
\newtheorem{df}[thm]{Definition}
\theoremstyle{remark}
\newtheorem{preremark}[thm]{Remark}
\newtheorem{preex}[thm]{Example}
\newenvironment{remark}{\begin{preremark}}{\qed\end{preremark}}
\newenvironment{ex}{\begin{preex}}{\qed\end{preex}}
\numberwithin{equation}{section}
\title[On a mixed Monge-Amp\`ere operator]{On a mixed Monge-Amp\`ere operator for quasiplurisubharmonic functions with analytic singularities}
\begin{document}
\date{\today}

\author{Richard L\"ark\"ang \& Martin Sera \& Elizabeth  Wulcan}
\address{R.\ L\"ark\"ang, E.\ Wulcan, Department of Mathematical Sciences\\Chalmers University of Technology and the University of Gothenburg\\412 96 Gothenburg\\SWEDEN}
\email{larkang@chalmers.se, wulcan@chalmers.se}
\address{M.\  Sera, Nagamori Institute of Actuators\\Kyoto University of Advanced Science\\Kyoto 615-8577\\JAPAN}
\email{sera.martin@kuas.ac.jp}

\subjclass[2010]{32W20, 32U05, 32U40 (14C17)}
\thanks{The first and the third author were partially supported by the
  Swedish Research Council. The second author was supported by the
  Knut and Alice Wallenberg Foundation.}

\begin{abstract}
    We consider mixed Monge-Amp\`ere products of quasiplurisubharmonic functions with analytic singularities,
    and show that such products may be regularized as explicit one parameter limits of
    mixed Monge-Amp\`ere products of smooth functions, generalizing results of Andersson, B\l{}ocki
    and the last author in the case of non-mixed Monge-Amp\`ere products.
    Connections to the theory of residue currents, going back to Coleff-Herrera, 
    Passare and others, play an important role in the proof.
As a consequence we get an approximation of Chern and Segre currents of
certain singular hermitian metrics on vector bundles by smooth forms
in the corresponding Chern and Segre classes. 
\end{abstract}

\maketitle

\section{Introduction}\label{intro}
Classical pluripotential theory, going back to Bedford-Taylor, \cites{BT1,BT2}, gives a way of defining mixed Monge-Amp\`ere products like
\begin{equation}\label{huvudroll}
    dd^c u_r \wedge \dots \wedge dd^c u_1, 
\end{equation}
where $u_1,\ldots, u_r$ are locally bounded  plurisubharmonic (psh)
functions on a complex manifold $X$.  
Here and throughout $d^c=(\partial - \dbar)/(4\pi i)$. 
Let $u$ be a locally bounded psh function 
and let
$T$ be a closed positive current on $X$. Then 
\begin{equation}\label{pasklilja} 
dd^c u \wedge T := dd^c (uT)
\end{equation}
is a well-defined closed positive current. In particular one can give
meaning to mixed Monge-Amp\`ere products like \eqref{huvudroll} 
by inductively applying \eqref{pasklilja}. 
Theorem ~2.1 in \cite{BT2} asserts that \eqref{huvudroll} satisfies
the following monotone continuity: 
If $u^j_k$ are decreasing
sequences of psh functions
converging pointwise to $u_k$, then
\begin{equation}\label{monoton}
dd^cu^j_r\wedge\cdots\wedge dd^cu^j_1\to
dd^cu_r\wedge\cdots\wedge dd^cu_1.
\end{equation}

Demailly later extended this construction to the situation where 
the unbounded loci of the $u_i$ are small in a certain
sense, \cite{Dem}. 
For general psh functions there is no such canonical (mixed) Monge-Amp\`ere
product as \eqref{huvudroll}; e.g., one cannot expect 
\eqref{monoton} to hold in
general. 

\smallskip 

Recall that a psh function $u$ has \emph{analytic
  singularities}\footnote{See Remark ~\ref{rem:nonSmooth}.} if locally 
\begin{equation}\label{as}
  u=c\log|f|^2+v,
\end{equation}
where $c$ is a positive constant,  $f=(f_1,\dotsc,f_m)$ is a tuple of holomorphic functions,
and $v$ is smooth. 
In \cite{LRSW}, together with Raufi,  
we gave meaning to \eqref{huvudroll} for psh functions
$u_i$ with analytic singularities on $X$ by inductively
defining it as 
\begin{equation}\label{skata} 
dd^c u_{k} \wedge \cdots\wedge dd^c u_1:=
dd^c\big
(u_{k}\1_{X\setminus Z_{k}}dd^c u_{k-1}\wedge\cdots\wedge
dd^c u_1\big ),  
\end{equation}
where $Z_k$ is the unbounded locus of $u_k$, for $k=1,\ldots,
r$. 
Assuming that we have inductively defined $T:=dd^c u_{k-1}\wedge \cdots
\wedge dd^c u_1$, then for $u=u_{k}$ with unbounded locus $Z$ we define 
\begin{equation}\label{nyar} 
u\1_{X\setminus Z} T = 
\lim_{j\to \infty} u^j \1_{X\setminus Z} T, 
\end{equation} 
where $u^j$ is a sequence of smooth psh functions decreasing to
$u$. 
Propositions~3.2 and~3.4 in \cite{LRSW} assert that \eqref{nyar} has locally finite
mass and is independent of the
regularizing sequence $u^j$, and that 
\[
dd^c u \wedge \1_{X\setminus Z} T = dd^c (u\1_{X\setminus Z} T)
\]
is closed and positive and coincides with the classical
Bedford-Taylor-Demailly Monge-Amp\`ere product when this is defined. 
The definition of the product \eqref{skata} is a straightforward generalization of
previous work \cite{AW} by Andersson and the last author, where the
generalized Monge-Amp\`ere product $(dd^c u)^m$ was defined for psh functions $u$
with analytic singularities.

In \cite{LRSW} the generalized mixed Monge-Amp\`ere products
\eqref{skata} were used to construct Chern and Segre forms for certain singular
metrics on vector bundles, 
and in \cites{ASWY, AESWY} currents like these were
used to understand nonproper intersection theory in terms 
of currents.

\medskip 

The main goal of this paper is to prove a one parameter regularization
of the mixed
Monge-Amp\`ere products \eqref{skata}, similar to \eqref{monoton}. 
In fact, we will work in a slightly more general setting: 
Recall that a function $\varphi:X\to \R\cup\{-\infty\}$ is \emph{quasiplurisubharmonic (qpsh)}
if it is locally given as $\ph=u+a$, 
where $u$ is psh and $a$ is smooth.
We say that $\ph$ has \emph{analytic singularities} if $u$
has. 
Moreover, we
say that a closed current $T$  that is locally given as a sum
of currents \eqref{skata} multiplied by smooth closed $(p,p)$-forms has
\emph{analytic singularities}, see Definition
~\ref{current-with-as}. 
In \cite{LRSW}*{Lemma~3.5}, we showed that 
$\varphi\1_{X\setminus Z} T:= u\1_{X\setminus Z} T +
    a\1_{X\setminus Z} T,$
where $Z$ is the unbounded locus of $\varphi$, 
is independent of the
decomposition $\varphi=u+a$. 
It follows that 
$dd^c \varphi\wedge T= dd^c (\varphi\1_{X\setminus Z} T)$ is a well-defined current
with analytic singularities, and in particular we can inductively
define 
products 
\begin{equation}\label{hoppet}
dd^c \varphi_r\wedge \cdots\wedge dd^c \varphi_1, 
\end{equation}
if $\varphi_1, \ldots , \varphi_r$ are qpsh functions with analytic
singularities.

Since \eqref{monoton} does not hold in general 
one cannot expect convergence of any
decreasing regularizing sequences $\varphi_k^j$. 
For example, one can find smooth decreasing sequences of psh  
functions $u^j$ and $v^j$ converging to the same psh function
$u$ with analytic singularities, but where $(dd^c u^j)^2$ and $(dd^c v^j)^2$ converge to
different positive closed currents, see, e.g., Example~3.2 in
\cite{ABW}. 

\begin{df} \label{df:rho}
Let $\rho\colon \R \to \R$ be a smooth, convex, increasing function such that $\rho(t)$ is constant for $t \ll 0$ and such that $\rho(t) = t$ for $t \gg 0$.
Let $\rho_j(t) = \rho(t+j)-j$. 
\end{df}

Note, that if $\ph$ is a qpsh function with analytic singularities,
then $\rho_j\circ \varphi$ is a sequence of smooth functions
decreasing to $\ph$. 
In \cite{ABW}*{Theorem~1.1} it was proved that if $\varphi$ is a psh
function with analytic singularities, then 
\begin{equation}\label{himmelriket}
\lim_{j\to \infty} \big (dd^c (\rho_j \circ \varphi)\big )^m = (dd^c
\varphi)^m, 
\end{equation} 
and in \cite{B}*{Theorem~1} this was extended to the case when $\varphi$ is
qpsh. 
In \cite{A} the product $(dd^c u)^m$ was defined in the case when
$\varphi$ is of the form $\log|f|^2$ and a version of \eqref{himmelriket}
was proved in this case, see \cite{A}*{Proposition~4.4}.

It is not hard to see that
\eqref{hoppet} is not commutative in general, see, e.g., 
\cite{LRSW}*{Example~3.1} and 
therefore it cannot hold in general that 
\[ dd^c(\rho_{j_2}\circ\ph_2)\wedge dd^c(\rho_{j_1}\circ\ph_1)\to dd^c
\ph_2\wedge dd^c \ph_1
\]
as $j_1\to \infty$ and $j_2\to \infty$ independently, 
cf.\ Remark ~\ref{rmk:Lucas}.
The following definition is inspired by the residue theory due to 
Coleff and Herrera, \cite{CH}.

\begin{df}\label{df:adm-j}
    We say that a sequence $(j_1,\dots,j_r)\colon \N \to \R^r$ \textit{tends to $\infty$ along an admissible path},
    if for any $q \geq 0$, and $k=1,\dots,r-1$,
    \begin{equation*}
        j_k(\nu)-q\cdot j_{k+1}(\nu) \to \infty
    \end{equation*}
and $j_r(\nu) \to \infty$ as $\nu\to \infty$. 
\end{df}

\begin{ex}
The sequence $(j_1, j_2, \ldots, j_r)=(\nu^r, \nu^{r-1}, \ldots, \nu)$ 
tends to $\infty$ along an admissible path. 
\end{ex}

Our main result is the following generalization of
\eqref{himmelriket}. 

\begin{thm}\label{huvud} 
    Assume that $\varphi_1,\ldots, \varphi_r$ are qpsh functions with
    analytic singularities and let $\rho_j$ be as in Definition~\ref{df:rho}.
    If the sequence $(j_1,\dotsc,j_r)\colon \N\rightarrow \R^r$ tends to $\infty$ along an admissible path,
    then
    \begin{equation*}
        \lim_{\nu\rightarrow \infty}
        \big(dd^c(\rho_{j_r(\nu)}\circ\ph_r)\big)^{m_r} \wedge\dots\wedge   \big(dd^c(\rho_{j_1(\nu)}\circ\ph_1)\big)^{m_1} 
        =(dd^c \varphi_r)^{m_r}\wedge\cdots\wedge (dd^c \varphi_1)^{m_1}
    \end{equation*}
    for $m_1,\ldots, m_r\geq 1$. 
\end{thm}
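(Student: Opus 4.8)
The plan is to argue by induction on the number $r$ of distinct functions. Since the statement is local, I would work on a ball in $\C^n$. For $r=1$ the admissible path condition reduces to $j_1(\nu)\to\infty$, and the assertion is then exactly \cite{ABW}*{Theorem~1.1}, extended to the quasiplurisubharmonic case in \cite{B}*{Theorem~1} (both with the exponent $m_1$ included).

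For the inductive step, write $\psi^\nu_k:=\rho_{j_k(\nu)}\circ\varphi_k$ and let $Z_k$ be the unbounded locus of $\varphi_k$. Put
\[
S^\nu:=\big(dd^c\psi^\nu_{r-1}\big)^{m_{r-1}}\wedge\cdots\wedge\big(dd^c\psi^\nu_1\big)^{m_1},\qquad
S:=\big(dd^c\varphi_{r-1}\big)^{m_{r-1}}\wedge\cdots\wedge\big(dd^c\varphi_1\big)^{m_1}.
\]
The truncated sequence $(j_1,\dots,j_{r-1})$ still tends to $\infty$ along an admissible path, so by the induction hypothesis $S^\nu\to S$; combined with a locally uniform bound on the masses of the $S^\nu$ — obtained from positivity, closedness and the structure of the $\psi^\nu_k$, as in \cite{AW} and \cite{LRSW} — this reduces the theorem to the two statements
\begin{equation*}
\big(dd^c\psi^\nu_r\big)^{m_r}\wedge\big(S^\nu-S\big)\longrightarrow 0
\qquad\text{and}\qquad
\big(dd^c\psi^\nu_r\big)^{m_r}\wedge S\longrightarrow\big(dd^c\varphi_r\big)^{m_r}\wedge S .
\end{equation*}
The second is the case $r=1$ carried out relative to the fixed background current $S$, which has analytic singularities: for $m_r=1$ it is the definition \eqref{nyar}--\eqref{skata} with $T=S$ (using that, by \cite{LRSW} and \cite{B}, the product may be computed with the regularizing sequence $\psi^\nu_r$), and for general $m_r$ I would obtain it by running the proof of \cite{ABW}*{Theorem~1.1} with the extra factor $S$ carried along, the Bedford--Taylor/Demailly-type estimates it uses remaining valid because $S$ has analytic singularities, \cite{Dem}, \cite{LRSW}.

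The first statement is where the admissible path hypothesis is genuinely needed, and it is the heart of the argument. After reducing to the model case $\varphi_k=c_k\log|f_k|^2$ as in the proofs of \cite{ABW} and \cite{B} (the general case then following by their perturbation arguments, which reinstate the smooth parts $v_k$), the functions $\psi^\nu_k$ are, up to uniformly bounded terms, of the form $c_k\log(|f_k|^2+\delta_k(\nu))$ with $\delta_k(\nu)=e^{-j_k(\nu)/c_k}\to 0$, so that $S^\nu$ becomes, up to bounded perturbations, the Coleff--Herrera/Passare-type regularization of $S$ with parameters $\delta_1(\nu),\dots,\delta_{r-1}(\nu)$, while $S$ itself is, by \cite{A}, \cite{AW} and \cite{LRSW}, a finite sum of components of Bochner--Martinelli- and Coleff--Herrera-type residue currents of $f_1,\dots,f_{r-1}$. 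Feeding these representations into the quantitative estimates underlying the $r=1$ case, I would bound $\big(dd^c\psi^\nu_r\big)^{m_r}\wedge\big(S^\nu-S\big)$ by the product of a factor measuring the rate at which $S^\nu$ converges to $S$ — governed by the coarsest inner scale $\delta_{r-1}(\nu)$ — with a factor measuring how strongly $\big(dd^c\psi^\nu_r\big)^{m_r}$ concentrates near $Z_r$ — governed by $j_r(\nu)$. The admissibility condition $j_{r-1}(\nu)-q\,j_r(\nu)\to\infty$ for every $q\geq 0$ forces the former to dominate, so the product tends to $0$. This is precisely how admissible paths operate in the Coleff--Herrera theory of iterated residues, \cite{CH}: each scale is negligible against all powers of the next coarser one, so the ordered limit is independent of the path — which is exactly what is needed here, since neither \eqref{monoton} nor commutativity holds for \eqref{hoppet}.

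The step I expect to be the main obstacle is making this estimate precise and \emph{uniform} in $\nu$: tracking, through the residue-current representations of $S^\nu$, $S$ and their difference, exactly how the error depends on $j_{r-1}(\nu)$ and $j_r(\nu)$, and in particular controlling the pairing of the current $S^\nu-S$ against the concentrating smooth form $\big(dd^c\psi^\nu_r\big)^{m_r}\wedge\chi$ for test forms $\chi$. The remaining technical points are the bookkeeping needed to descend from the model case to general $\varphi_k=c_k\log|f_k|^2+v_k$, to allow the singular set of $S$ to meet $Z_r$, and to check that the second statement above indeed follows from the methods of \cite{ABW} and \cite{LRSW} without circularity.
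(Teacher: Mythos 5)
Your proposal follows a genuinely different route from the paper's, and it contains a real gap at the step you yourself flag as the obstacle.

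The paper does not argue by induction on $r$. After localizing, principalizing, and resolving so that each $\varphi_k=c_k\log|f_k|^2+v_k$ with $f_k$ a single holomorphic function, it writes each factor $\big(dd^c(\rho_{j_k}\circ\ph_k)\big)^{m_k}$ via a binomial-type expansion (Lemma~\ref{socker}) as $\eta^{m}$ plus a finite sum of terms $\frac{\dbar\chi^{\ell}_{k,\epsilon_{j_k}}}{f_k}\wedge\Theta_{\ell,1}+\frac{\chi^\ell_{k,\epsilon_{j_k}}}{f_k}\Theta_{\ell,2}$ with $\Theta_{\ell,i}$ smooth and fixed. Multiplying out the whole $r$-fold product (equation \eqref{eq:Madeira}) yields a finite sum of fixed smooth forms wedged with products of factors of the form $\dbar\tilde\chi_{k,\epsilon}/f_k$ or $\tilde\chi_{k,\epsilon}/f_k$. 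The entire admissibility content is then delegated to Proposition~\ref{LSvariant}, a mild extension of \cite{LS}*{Theorem~11}, which says one-parameter limits along admissible paths agree with iterated limits for such products. The iterated limit is computed by recursively applying the second part of Lemma~\ref{socker}. No rate estimate is ever made explicit; the quantitative work resides entirely inside the cited \cite{LS}*{Theorem~11}.

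Your inductive split into (a) $(dd^c\psi^\nu_r)^{m_r}\wedge(S^\nu-S)\to 0$ and (b) $(dd^c\psi^\nu_r)^{m_r}\wedge S\to(dd^c\varphi_r)^{m_r}\wedge S$ is a reasonable outline, and (b) is indeed true (it is, in effect, the content of the second part of Lemma~\ref{socker} with $\theta=\eta=0$). You also correctly identify that the admissible-path hypothesis is what should make (a) work, and that the relevant mechanism is the Coleff--Herrera/Passare one. But you never prove (a); you only sketch a heuristic in which ``the rate of convergence of $S^\nu\to S$'' is balanced against ``the concentration rate of $(dd^c\psi^\nu_r)^{m_r}$.'' This is the whole theorem, not a detail: $S^\nu-S$ is not sign-definite, it is paired against a sequence of smooth forms that is not uniformly bounded near $Z_r$, and controlling that pairing requires the detailed resolution-of-singularities analysis that underlies \cite{P}*{Proposition~1} and \cite{LS}*{Theorem~11}. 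If you were to make (a) rigorous, you would essentially be reproving \cite{LS}*{Theorem~11}, at which point the induction buys you nothing: one might as well expand the full $r$-fold product algebraically as in \eqref{eq:Madeira} and invoke the residue-current regularization directly, which is exactly what the paper does. A smaller remark: the paper does not ``reinstate $v_k$ by a perturbation argument''; instead it absorbs the smooth parts $v_k$ and the constants $c_k$ into the cut-off functions $\tilde\chi_{k,\epsilon}=\chi_k(|f_k|^{2c_k}e^{v_k}/\epsilon)$ of Proposition~\ref{LSvariant}, which is where the smoothness (rather than mere boundedness) of $v_k$ is actually used, via a change of variables in the proof.
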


Indeed, in the case when $r=1$ we just get back \eqref{himmelriket}. 
In fact, in \cites{ABW, B} the results are slightly more general; 
a more general
definition of analytic singularities is used, see Remark
~\ref{rem:nonSmooth}, and slightly more general
sequences $\rho_j$ are allowed, see Remark ~\ref{bobby}. 

\medskip

Inspired by \cite{ABW}*{Theorem~1.2}, in \cite{LRSW} we introduced
a formalism for global generalized mixed Monge-Amp\`ere operators. 
If $\varphi$ is a qpsh function with analytic
singularities and unbounded locus $Z$, 
$\theta$ and $\eta$ are closed $(1,1)$-forms, and $T$ is a current
with analytic singularities on $X$,  
we let 
\begin{equation}\label{sommarskuggan}
[\theta + dd^c\varphi]_\eta\wedge T :=
\theta \wedge \1_{X\setminus Z} T + dd^c\varphi \wedge \1_{X\setminus Z} T +
\eta\wedge \1_Z T.
\end{equation}
In fact, in \cite{LRSW} we only allowed $\varphi$ to be psh, but it is
not hard to see that the definition extends to qpsh functions; Lemma
~\ref{fjutt} asserts that   
 $[\theta + dd^c\varphi]_\eta\wedge T$ is a well-defined current
with analytic singularities that  is independent of the decomposition
of the current $\theta +dd^c\varphi$ as the sum of $\theta$ and
$dd^c\varphi$. 
In particular, if $\varphi_1,\ldots, \varphi_r$ are qpsh functions with
analytic singularities and $\theta_1,\ldots, \theta_r$ and $\eta_1,\ldots, \eta_r$ are
closed $(1,1)$-forms, 
we can give meaning to 
the global mixed Monge-Amp\`ere product  
\begin{equation}
\label{generalizedMA}
[\theta_r + dd^c\varphi_r]^{m_r}_{\eta_r}\wedge \cdots \wedge [\theta_1 +
dd^c\varphi_1]^{m_1}_{\eta_1}
\end{equation}
by letting 
$[\theta_1 +dd^c\varphi_1]_{\eta_1}= [\theta_1 +dd^c\varphi_1]_{\eta_1}
\wedge 1$ and inductively applying \eqref{sommarskuggan}. 
We have the following mass formula:
\begin{prop}\label{host}
    Assume that $X$ is compact. Moreover, assume that $\varphi_1,\ldots,
    \varphi_r$ are qpsh functions with analytic singularities and that 
$\theta_1,\ldots, \theta_r$ and $\eta_1,\ldots, \eta_r$ are 
    closed $(1,1)$-forms on $X$ such that $\theta_k-\eta_k=d\alpha_k$ for some smooth forms $\alpha_k$.
    Then,
    \begin{equation*}
        \int_X [\theta_r + dd^c\varphi_r]^{m_r}_{\eta_r}\wedge \cdots 
        \wedge [\theta_1 +dd^c\varphi_1]^{m_1}_{\eta_1} = 
        \int_X \theta^{m_r}_r\wedge\cdots\wedge\theta^{m_1}_1, 
    \end{equation*}%
    where $m_1+\cdots +m_r=\dim X$. 
\end{prop}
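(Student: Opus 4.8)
The plan is to prove, by induction on the number of factors, that after integration over the compact manifold $X$ one may replace each factor $[\theta_k+dd^c\varphi_k]_{\eta_k}$ by the smooth closed form $\theta_k$; the point is that each such replacement changes the current only by a $d$-exact term, which integrates to $0$ by Stokes' theorem. Set $N=m_1+\dots+m_r=\dim X$, let $T_0=1$, and for $1\le\ell\le N$ let $T_\ell$ be the partial product obtained from \eqref{generalizedMA} by keeping only the innermost $\ell$ factors, so that $T_{\ell+1}=[\theta+dd^c\varphi]_\eta\wedge T_\ell$ for the appropriate triple $(\theta,\varphi,\eta)$ taken from the $(\theta_k,\varphi_k,\eta_k)$, and $T_N$ is the product in the statement. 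By Lemma~\ref{fjutt} each $T_\ell$ is a well-defined closed current with analytic singularities, so the construction makes sense at every step, and $\int_X T_N$ is defined since $T_N$ is of top degree on the compact manifold $X$.

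Fix a step and let $Z$ be the unbounded locus of the relevant $\varphi$, an analytic subset of $X$. Unwinding \eqref{sommarskuggan}, using $\1_{X\setminus Z}T_\ell+\1_Z T_\ell=T_\ell$ and the identity $dd^c\varphi\wedge\1_{X\setminus Z}T_\ell=dd^c(\varphi\1_{X\setminus Z}T_\ell)$ recalled above, one gets
\begin{equation*}
T_{\ell+1}=\theta\wedge T_\ell-(\theta-\eta)\wedge\1_Z T_\ell+dd^c\big(\varphi\1_{X\setminus Z}T_\ell\big),
\end{equation*}
where $\varphi\1_{X\setminus Z}T_\ell$ is the current of locally finite mass provided by \cite{LRSW}. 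The last term equals $d\big(d^c(\varphi\1_{X\setminus Z}T_\ell)\big)$, hence is $d$-exact. For the middle term, write $\theta-\eta=d\alpha$ with $\alpha$ smooth. The key observation is that $\1_Z T_\ell$ is closed: locally $T_\ell$ is a sum, with smooth closed coefficients, of generalized Monge-Amp\`ere products \eqref{skata} of psh functions, which are closed positive currents, and the part of a closed positive current carried by an analytic set is again closed by the standard (Skoda--El Mir) extension theory for closed positive currents, cf.\ \cite{LRSW}. Consequently $(\theta-\eta)\wedge\1_Z T_\ell=d\alpha\wedge\1_Z T_\ell=d(\alpha\wedge\1_Z T_\ell)$ is $d$-exact as well, and we conclude $T_{\ell+1}=\theta\wedge T_\ell+dR_\ell$ for some current $R_\ell$ on $X$.

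Iterating this identity and moving the successive closed forms $\theta_k$ past the $d$-exact terms (possible since $d\theta_k=0$ and the $\theta_k$ have even degree) yields $T_N=\theta_r^{m_r}\wedge\dots\wedge\theta_1^{m_1}+dR$ for some current $R$ on $X$. Since $X$ is compact and boundaryless, $\int_X dR=0$ by Stokes' theorem, which is the asserted formula. I expect the only genuinely subtle point to be the closedness of $\1_Z T_\ell$ --- equivalently of the trivial extension $\1_{X\setminus Z}T_\ell$ of $T_\ell$ across $Z$ --- which, at the level of these partial products, is the same phenomenon as the closedness of $dd^c\varphi\wedge T$ established in \cite{LRSW}; the rest is bookkeeping and Stokes. (Alternatively, the formula can be deduced from a $[\theta+dd^c\varphi]_\eta$-version of Theorem~\ref{huvud}, since weak convergence of top-degree currents on compact $X$ forces convergence of total masses and the smooth approximants $\theta_k+dd^c(\rho_j\circ\varphi_k)$ are cohomologous to $\theta_k$.)
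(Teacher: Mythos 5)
Your proof is correct and follows essentially the same route as the paper: the one-step identity $T_{\ell+1}=\theta\wedge T_\ell+dR_\ell$ you derive, using Skoda--El Mir closedness of $\1_Z T_\ell$, is exactly the paper's Proposition~\ref{vinter}, and the conclusion is obtained by iterating it and applying Stokes on the compact manifold $X$.
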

In the case when $r=1$ (and $\varphi_1$ is psh and $\eta_1=\theta_1$),
this is just Theorem~1.2 in \cite{ABW}, see
\cite{LRSW}*{Remark~3.6} and Remark~\ref{sommaren} below.

We have the following regularization result for the
products \eqref{generalizedMA} in the case when 
$\eta_k=\theta_k$. 
\begin{thm}\label{paron}
    Assume that $\varphi_1,\ldots, \varphi_r$ are qpsh functions with
    analytic singularities, that $\theta_1,\ldots, \theta_r$ are 
    closed $(1,1)$-forms, and that $\rho_j$ is as in Definition~\ref{df:rho}.
    If the sequence $(j_1,\dotsc,j_r)\colon \N\rightarrow \R^r$ tends to $\infty$ along an admissible path,
    then
   \begin{multline*}
                \lim_{\nu\rightarrow \infty}
        \big(\theta_r+dd^c(\rho_{j_r(\nu)}\circ\ph_r)\big)^{m_r}
        \wedge\dots\wedge
        \big(\theta_1+dd^c(\rho_{j_1(\nu)}\circ\ph_1)\big)^{m_1} \\
        =[\theta_r + dd^c\varphi_r]^{m_r}_{\theta_r}\wedge \cdots \wedge [\theta_1 +dd^c\varphi_1]^{m_1}_{\theta_1}.
        \end{multline*}
\end{thm}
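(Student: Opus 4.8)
The plan is to reduce Theorem~\ref{paron} to the main theorem, Theorem~\ref{huvud}, by expanding both sides of the asserted convergence into finite sums of products \eqref{hoppet}, each wedged with a fixed smooth closed form. The algebraic observation that makes this work is that, when $\eta=\theta$, the defining formula \eqref{sommarskuggan} collapses: if $Z$ is the unbounded locus of $\ph$ and $T$ is a current with analytic singularities, then, since $\1_{X\setminus Z}T+\1_Z T=T$,
\[
[\theta+dd^c\ph]_\theta\wedge T=\theta\wedge T+dd^c(\ph\1_{X\setminus Z}T),
\]
and $dd^c(\ph\1_{X\setminus Z}T)$ is precisely the product with analytic singularities defined by \eqref{skata}. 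Using that the local operator $dd^c\ph\wedge(\cdot)$ commutes with wedging by a fixed smooth closed form --- part of the basic formalism of \cite{LRSW}, cf.\ Lemma~\ref{fjutt} --- the operators $\theta_k\wedge(\cdot)$ and $dd^c\ph_k\wedge(\cdot)$ commute, so iterating the identity above, applying the binomial theorem, and pulling all of the $\theta$'s to the front shows that \eqref{generalizedMA}, in the case $\eta_k=\theta_k$, equals
\[
\sum_{L}\Big(\prod_{k=1}^r\binom{m_k}{\ell_k}\Big)\,\alpha_L\wedge(dd^c\ph_r)^{\ell_r}\wedge\cdots\wedge(dd^c\ph_1)^{\ell_1},
\]
where $L=(\ell_1,\dots,\ell_r)$ ranges over the tuples with $0\le\ell_k\le m_k$, where $\alpha_L:=\theta_r^{m_r-\ell_r}\wedge\cdots\wedge\theta_1^{m_1-\ell_1}$, and where the factors with $\ell_k=0$ are simply dropped.

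Since every form $\theta_k+dd^c(\rho_{j_k(\nu)}\circ\ph_k)$ is smooth and closed, the ordinary binomial theorem expands the left-hand side of Theorem~\ref{paron} into the same finite sum, with each $(dd^c\ph_k)^{\ell_k}$ replaced by $(dd^c(\rho_{j_k(\nu)}\circ\ph_k))^{\ell_k}$ and with the identical constants and forms $\alpha_L$; here every factor is smooth, so the reordering is harmless. Because wedging by the fixed smooth form $\alpha_L$ is continuous for the weak topology on currents and the sum is finite, it then suffices to prove, for each fixed $L$, that
\[
(dd^c(\rho_{j_r(\nu)}\circ\ph_r))^{\ell_r}\wedge\cdots\wedge(dd^c(\rho_{j_1(\nu)}\circ\ph_1))^{\ell_1}\to(dd^c\ph_r)^{\ell_r}\wedge\cdots\wedge(dd^c\ph_1)^{\ell_1}
\]
as $\nu\to\infty$.

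After deleting the indices $k$ with $\ell_k=0$, this is exactly Theorem~\ref{huvud}, applied to the functions $\ph_k$ with $\ell_k\ge 1$ (each counted with multiplicity $\ell_k$) and to the corresponding subsequence of $(j_1,\dots,j_r)$ --- provided that subsequence still tends to $\infty$ along an admissible path. This follows from Definition~\ref{df:adm-j} by a telescoping estimate: for $k<k'$,
\[
j_k(\nu)-q\,j_{k'}(\nu)=\sum_{i=k}^{k'-2}\big(j_i(\nu)-j_{i+1}(\nu)\big)+\big(j_{k'-1}(\nu)-q\,j_{k'}(\nu)\big),
\]
and each of the finitely many summands tends to $+\infty$ --- the generic one by admissibility with $q=1$, the last by admissibility with the given $q$ --- so $j_k(\nu)-q\,j_{k'}(\nu)\to\infty$; moreover $j_{k''}(\nu)\to\infty$ for the largest surviving index $k''$. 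When all $\ell_k=0$, both sides of Theorem~\ref{paron} equal $\theta_r^{m_r}\wedge\cdots\wedge\theta_1^{m_1}$ and there is nothing to prove. Summing the finitely many terms completes the argument.

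All of the substance sits in Theorem~\ref{huvud}; the rest is bookkeeping. The points needing (mild) care are the algebraic expansion in the first step --- in particular the commutation of the operators $dd^c\ph_k\wedge(\cdot)$ with smooth closed forms, which one borrows from \cite{LRSW} --- and the stability of admissible paths under deletion of indices verified above. I do not expect any genuine obstacle beyond invoking Theorem~\ref{huvud} itself.
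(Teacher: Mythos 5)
Your proposal is correct, but it takes a genuinely different route from the paper. The paper proves the more general Theorem~\ref{paronmos} (which allows $\eta_k\neq\theta_k$) directly, and obtains both Theorems~\ref{huvud} and~\ref{paron} as immediate corollaries; the proof of Theorem~\ref{paronmos} goes through Lemma~\ref{socker}, which expands each factor $\alpha_j^{(k)}$ in terms of Coleff--Herrera-type expressions $\dbar\chi_\epsilon^\ell/f$ and $\chi_\epsilon^\ell/f$ after a principalization and resolution, and then invokes Proposition~\ref{LSvariant} (a variant of the L\"ark\"ang--Samuelsson Kalm one-parameter regularization for residue currents) to pass from iterated to one-parameter admissible limits. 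You instead take Theorem~\ref{huvud} as given and reduce Theorem~\ref{paron} to it by an operator binomial expansion, which works precisely because when $\eta=\theta$ the term $\eta\wedge\1_Z T$ recombines with $\theta\wedge\1_{X\setminus Z}T$ to give $\theta\wedge T$. The key algebraic fact you use --- that wedging with a fixed smooth closed form commutes with $dd^c\ph\wedge\1_{X\setminus Z}(\cdot)$, so all the $\theta$'s can be pulled to the front while the non-commuting $dd^c\ph_k$-factors keep their order --- does hold (it follows from the definitions \eqref{nyar}, \eqref{hemmavid}, \eqref{skata}, and the fact that $\1_{X\setminus Z}(\theta\wedge T)=\theta\wedge\1_{X\setminus Z}T$ for pseudomeromorphic $T$), though it is not what Lemma~\ref{fjutt} says; that lemma concerns independence of the splitting $\theta+dd^c\ph$, not commutation with smooth forms, so the citation is misplaced. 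Your telescoping verification that admissible paths are stable under deletion of indices is correct and is indeed the same observation that appears (implicitly, phrased for subsequences) in the paper's proof of Theorem~\ref{paronmos}. What your approach buys is a short, machinery-free derivation of Theorem~\ref{paron} from Theorem~\ref{huvud}, nicely complementing the paper's remark that Theorem~\ref{huvud} follows from Theorem~\ref{paron} by setting $\theta_k=0$, so the two are elementarily equivalent; what it does not buy is the more general Theorem~\ref{paronmos} with $\eta_k\neq\theta_k$, for which the extra $\eta_k\wedge\1_{Z_k}$ term in \eqref{sommarskuggan} does not collapse and the expansion into products of $(dd^c\ph_k)$'s no longer suffices.
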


In Section ~\ref{sec:huvud} we present a regularization result, Theorem
~\ref{paronmos}, for \eqref{generalizedMA} in the general case. 
Theorems ~\ref{huvud} and ~\ref{paron} are immediate consequences of
Theorem ~\ref{paronmos} below. In fact,  Theorem ~\ref{huvud} also follows immediately
from Theorem ~\ref{paron} by setting each $\theta_k=0$.

When $r=1$ Theorem ~\ref{paron} reads: if $\varphi$ is a qpsh function with
analytic singularities and $\theta$ is a closed $(1,1)$-form, then 
\begin{equation*}
\lim_{j\to\infty}
 \big (\theta+ dd^c (\rho_j\circ \varphi) \big )^m=
[\theta + dd^c\varphi]_{\theta}^m.
\end{equation*}
This is Theorem~1 in \cite{B}, except that the setting there is
slightly more general, cf.\ the discussion after Theorem
~\ref{huvud}. Also in \cite{B} the right hand side is denoted simply by
$(\theta + dd^c\varphi)^m$, see Remark ~\ref{sommaren}. 

\smallskip 

Mixed Monge-Amp\`ere products of qpsh functions with analytic
singularities are closely related to so-called residue currents in the
sense of Coleff-Herrera, \cite{CH}, and the proofs of our results are based on regularization results for residue
currents. In particular, we use a slightly modified result by the
first author and Samuelsson Kalm \cite{LS}.

\begin{remark} \label{rem:nonSmooth}
    In the literature, sometimes a more general definition of psh functions with analytic singularities is used than here,
    namely that in \eqref{as}, the function $v$ is just required to be locally bounded. 
    In the papers \cites{AW, ABW, LRSW, B} 
    this more general definition of psh and qpsh functions with analytic
    singularities is considered.  
    Also Proposition~\ref{host}  and the results in
    Section~\ref{MAsektion} below 
    work for this more general definition, while the smoothness of $v$ appears to be 
    essential in the proofs of Theorems~\ref{huvud} and \ref{paron}.
 \end{remark}

\medskip

The paper is organized as follows. In Section ~\ref{MAsektion} we
discuss the construction of the generalized mixed Monge-Amp\`ere operator 
from \cite{LRSW}. In particular, we give a proof of Proposition
~\ref{host}. We also relate our
products to
mixed non-pluripolar
Monge-Amp\`ere products in the sense of \cites{BT3, BEGZ} and rephrase
Proposition ~\ref{host} in terms of these.  
In Section ~\ref{sec:motivation} we give some background on
(regularization of) residue currents and show how they are related to
mixed Monge-Amp\`ere products of (q)psh functions with analytic
singularities. We also give a proof of a special case of Theorem
~\ref{huvud}. 
In Section ~\ref{sec:huvud} we prove Theorems ~\ref{huvud} and
~\ref{paron} and more generally Theorem ~\ref{paronmos}, and we also discuss some possible generalizations. 
Finally, in Section ~\ref{sec:Segre}, we present an application of
Theorem ~\ref{paron} to Chern and Segre currents for singular
hermitian metrics with analytic singularities as defined in
\cite{LRSW}. Corollary ~\ref{cor:application} asserts that these Chern and Segre currents are given as one parameter limits of smooth forms in the
corresponding Chern and Segre classes.

\medskip 
\noindent 
\textbf{Acknowledgement:} We would like to thank Mats Andersson and
Zbigniew B\l{}ocki for valuable discussions related to this paper.

\section{Mixed Monge-Amp\`ere products of qpsh functions with analytic
singularities}
\label{MAsektion}

In this section we give some further background on generalized mixed
Monge-Amp\`ere products of qpsh functions with analytic
singularities. As pointed out in the introduction, within this section
we allow psh and qpsh functions that have analytic singularities in
the less restrictive way, i.e., where we only require $v$ in the presentation
\eqref{as} to be bounded, cf.\ Remark ~\ref{rem:nonSmooth}. Throughout
the paper we will assume that $X$ is a complex manifold. 
Recall that
the \emph{unbounded locus} 
of a psh function $u$ on $X$ 
is the set of points $x\in X$ such that $u$ is unbounded in every
neighborhood of $x$. The unbounded locus of a qpsh function $\varphi$,
locally given as $\varphi=u+a$, is defined as the unbounded locus of
$u$. 
Note that if $u$ or $\varphi$ has analytic singularities, then the unbounded locus is an analytic set, locally defined by
$\{f=0\}$ where $u$ is given by \eqref{as}.

The construction of mixed Monge-Amp\`ere operators in \cite{LRSW} is slightly more
general than mentioned in the introduction. 
Assume 
that $u_1,\ldots, u_r$ are psh functions with analytic
singularities on $X$, with unbounded loci $Z_1,\ldots,
Z_r$, respectively. 
Moreover assume that $U_1,\ldots,
U_r\subset X$ are constructible sets contained in $X\setminus Z_1,
\ldots, X\setminus Z_r$, respectively. 
In \cite{LRSW}*{Section~3} we gave meaning to the product 
\begin{equation}\label{blomma2}
dd^c u_r\1_{U_r}\wedge\cdots\wedge dd^c u_1 \1_{U_1}, 
\end{equation}
by defining it recursively as 
\begin{equation}\label{insta2}
dd^c u_{k}\1_{U_k}\wedge \cdots\wedge dd^c u_1\1_{U_1}:=
dd^c\big
(u_{k}\1_{U_{k}}dd^c u_{k-1}\1_{U_{k-1}}\wedge\cdots\wedge
dd^c u_1\1_{U_1}\big )
\end{equation}
for $k=1,\ldots, r$. 
Here 
\begin{equation}\label{paskafton}
u_{k}\1_{U_{k}}dd^c u_{k-1}\1_{U_{k-1}}\wedge\cdots\wedge
dd^c u_1\1_{U_1}=
\lim_{j\to \infty} u_k^j \1_{U_{k}}dd^c u_{k-1}\1_{U_{k-1}}\wedge\cdots\wedge
dd^c u_1\1_{U_1}, 
\end{equation} 
where $u^j_k$ is a sequence of smooth psh functions decreasing to
$u_k$. 
Proposition~3.2 in \cite{LRSW} asserts that \eqref{paskafton} has locally finite
mass and is independent of the
regularizing sequence $u^j_k$, and that \eqref{insta2} is a closed positive current.

\begin{df}\label{current-with-as}
We say that a closed $(p,p)$-current has \emph{analytic singularities}
if it is locally of the form
\begin{equation*}
T=\sum \beta_i \wedge \1_{U_i}T_i,
\end{equation*}
where the sum is finite, $\beta_i$ are closed 
forms, $U_i\subset X$ are constructible sets, and $T_i$
are currents of the form \eqref{blomma2} or $T_i=1$.  
\end{df}

We should remark that this definition extends (in a non-essential way)
the 
definition in \cite{LRSW}*{Section~3}. There a current with analytic
singularities refers to ($\1_U$ times) a current of the form
\eqref{blomma2}.

Note, in particular, that if $T$ is a 
current with analytic singularities, $u$ is a psh function with
analytic singularities with unbounded locus $Z$, and $U$ is a
constructible set contained in $X\setminus Z$, then
$dd^c u\wedge \1_U T:=dd^c(u\1_UT)$ is a well-defined current with
analytic singularities, cf.\ Remark~3.3 in \cite{LRSW}.

In \cite{LRSW}*{Lemma~3.5}, it was proved that if $T$ is a current
with analytic singularities, $\varphi=u+a$ is a qpsh function with
analytic singularities with unbounded locus $Z$, and $U\subset
X\setminus Z$ is a constructible set, then 
\begin{equation}\label{hemmavid}
\varphi\1_{U} T:= u\1_{U} T +
    a\1_{U} T,
\end{equation}  
is independent of the decomposition $\varphi=u+a$.
It follows that 
$dd^c \varphi\wedge \1_U T:= dd^c (\varphi\1_{U} T)$ is a well-defined current
with analytic singularities. 
In particular, we can inductively
define generalized mixed Monge-Amp\`ere products 
\begin{equation}\label{katta}
dd^c \varphi_r\1_{U_r}\wedge \cdots\wedge dd^c \varphi_1\1_{U_1}, 
\end{equation}
if $\varphi_i$ are qpsh functions with analytic
singularities with unbounded loci $Z_i$ and 
$U_i\subset X\setminus Z_i$ are constructible sets. 

\begin{remark}\label{hund}
Assume that $\pi:X'\to X$ is a holomorphic modification and that
$\varphi_1,\ldots, \varphi_r$ are qpsh functions with analytic
singularities on $X$. 
Then $\pi^*\varphi_1, \ldots, \pi^*\varphi_r$
are qpsh functions with analytic singularities on $X'$. Moreover,
using that 
$\alpha\wedge \pi_* \mu = \pi_* (\pi^* \alpha\wedge \mu)$
for any smooth form $\alpha$ on $X$ and current $\mu$ on $X'$, and that 
$\1_U\pi_* \mu = \pi_* (\1_{\pi^{-1}U} \mu)$
for any constructible set $U\subset X$ and any positive closed (or
normal) current $\mu$ on $X'$, it follows from the construction
that, if $U_1,\ldots,U_r \subset X$ are constructible sets contained in the complement of the unbounded
loci of $\varphi_1,\dots,\varphi_r$, respectively, then 
\begin{equation*}
 dd^c \varphi_r\1_{U_r}\wedge \cdots\wedge dd^c \varphi_1\1_{U_1} 
=
\pi_* \big (dd^c \pi^*\varphi_r\1_{\pi^{-1}U_r}\wedge \cdots\wedge
dd^c \pi^*\varphi_1\1_{\pi^{-1}U_1} \big ) 
\end{equation*} 
More generally it follows that for any current $T$ with analytic
singularities on $X$ 
there is a current $T'$ with analytic singularities on $X'$ such that
$T=\pi_* T'$. 
\end{remark} 

\begin{remark}\label{katt}
Note that $dd^c \varphi\wedge\1_U T$ only depends on the current $dd^c\varphi$ and not
on the particular choice of potential $\varphi$. Indeed, assume that
$\varphi_1=\varphi_2+h$, where $dd^c h=0$. Then $h$ is smooth and thus 
\[
dd^c \varphi_1\wedge \1_U T = dd^c (\varphi_2+h) \wedge\1_U T = dd^c \varphi_2\wedge\1_U
T + dd^c h \wedge \1_U T = dd^c \varphi_2\wedge \1_U T, 
\]
where the second equality follows since \eqref{hemmavid} is
independent of the decomposition $\varphi=u+a$.
\end{remark}

As in the introduction we will use the shorthand notation
\begin{equation}\label{applen} 
dd^c \varphi_r\wedge\cdots\wedge dd^c \varphi_1 
= 
dd^c \varphi_r \1_{X\setminus Z_r} \wedge 
\cdots\wedge dd^c \varphi_1 \1_{X\setminus Z_1}, 
\end{equation}
where $Z_k$ is the unbounded locus of $\varphi_k$.
This product is neither commutative nor additive in any of the factors (except for the right-most one), cf.~\cite{LRSW}*{Example~3.1}.

Let $\varphi$ be a qpsh function with analytic singularities with
unbounded locus $Z$, and let $\rho_j$ be as in Definition
~\ref{df:rho}. 
Since $\rho_j\circ \varphi$ is constant in a neighborhood of $Z$, 
\begin{equation}\label{karius}
\lim_{j\to\infty} dd^c (\rho_j\circ \ph) \wedge T 
= 
\lim_{j\to\infty} dd^c (\rho_j\circ \ph) \wedge \1_{X\setminus Z} T 
=
dd^c \varphi \wedge \1_{X\setminus Z} T. 
\end{equation}
In particular, with the shorthand notation \eqref{applen}, we get 
\begin{equation}\label{bacill}
    \lim_{{j_r}\to \infty}\cdots \lim_{j_1\to \infty}   dd^c(\rho_{j_r}\circ\ph_r)\wedge\dots\wedge
        dd^c(\rho_{j_1}\circ\ph_1) = 
dd^c\ph_r\wedge\dots\wedge
        dd^c \ph_1. 
\end{equation} 
In fact, from this it follows that \eqref{applen} coincides with the
classical Bedford-Taylor-Demailly product when this is defined,
cf.\ (the proof of) Proposition~3.4 in \cite{LRSW}. 
In particular, \eqref{applen} coincides with the classical product
$dd^c\varphi_r\wedge \cdots \wedge dd^c\varphi_1$ 
outside $Z_1\cup\cdots\cup Z_r$. 

\medskip

The following lemma shows that
\eqref{sommarskuggan} is independent of the decomposition of the
current $\theta+dd^c\varphi$ as the sum of $\theta$ and $dd^c\varphi$. 

\begin{lma}\label{fjutt}
Let $\varphi_1,\varphi_2$ be qpsh functions with analytic
singularities, let $\theta_1, \theta_2, \eta$ be closed
$(1,1)$-forms, and let $T$ be a current with analytic singularities.
Assume that 
$\theta_1 + dd^c\varphi_1 = \theta_2 + dd^c\varphi_2.$
Then 
\begin{equation}\label{struntsak}
[\theta_1 + dd^c\varphi_1]_\eta\wedge T =[\theta_2 +
dd^c\varphi_2]_\eta\wedge T. 
\end{equation}
\end{lma}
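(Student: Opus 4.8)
The plan is to reduce the statement to the already-established invariance \eqref{hemmavid} of $\varphi\1_U T$ under the decomposition $\varphi = u + a$. The hypothesis $\theta_1 + dd^c\varphi_1 = \theta_2 + dd^c\varphi_2$ says $dd^c(\varphi_1 - \varphi_2) = \theta_2 - \theta_1$, a smooth closed $(1,1)$-form; in particular $\varphi_1 - \varphi_2$ is $dd^c$ of nothing singular, so it is a smooth function. Hence $\varphi_1$ and $\varphi_2$ have the \emph{same} unbounded locus $Z$, and writing $\varphi_1 = u + a_1$ locally as in \eqref{as}, we may take $\varphi_2 = u + a_2$ with the \emph{same} psh part $u$ and $a_2 = a_1 - (\varphi_1 - \varphi_2)$ smooth. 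This is the key observation: the singular part can be chosen common to both.

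Next I would unwind both sides of \eqref{struntsak} using the definition \eqref{sommarskuggan}. By the definition of the class of currents with analytic singularities and the local nature of the statement, it suffices to check the identity locally, and by linearity in $T$ it suffices to treat $T$ of the form $\1_V\big(dd^c v_s\1_{V_s}\wedge\cdots\wedge dd^c v_1\1_{V_1}\big)$ or $T$ a smooth form times such. Both sides of \eqref{struntsak} differ only in their first two summands (the $\eta\wedge\1_Z T$ terms agree since the unbounded loci coincide), so it comes down to showing
\begin{equation*}
\theta_1\wedge\1_{X\setminus Z}T + dd^c\varphi_1\wedge\1_{X\setminus Z}T
= \theta_2\wedge\1_{X\setminus Z}T + dd^c\varphi_2\wedge\1_{X\setminus Z}T.
\end{equation*}
Using $dd^c\varphi_k\wedge\1_{X\setminus Z}T = dd^c(\varphi_k\1_{X\setminus Z}T)$ and \eqref{hemmavid} with the common decomposition above, we get $dd^c\varphi_1\wedge\1_{X\setminus Z}T - dd^c\varphi_2\wedge\1_{X\setminus Z}T = dd^c\big((a_1-a_2)\1_{X\setminus Z}T\big) = dd^c(a_1 - a_2)\wedge\1_{X\setminus Z}T$, since $a_1 - a_2$ is smooth and $dd^c$ commutes with multiplication by smooth functions in the sense that $dd^c(b\,S) = dd^c b\wedge S + \text{(mixed terms)}$ — more precisely, I would invoke Remark \ref{katt}, which already records exactly that $dd^c\varphi\wedge\1_U T$ depends only on the current $dd^c\varphi$. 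Then $dd^c(a_1 - a_2) = dd^c(\varphi_1 - \varphi_2) = \theta_2 - \theta_1$, and substituting gives the claimed equality after cancellation.

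The main obstacle is purely bookkeeping: one must make sure that the reduction to a common singular part $u$ is carried out \emph{before} applying \eqref{hemmavid}, since \eqref{hemmavid} is precisely the statement that resolves the ambiguity, and that the manipulation $dd^c\varphi\wedge\1_U T = dd^c(\varphi\1_U T)$ together with additivity $dd^c(\varphi_1\1_U T) = dd^c(\varphi_2\1_U T) + dd^c((a_1-a_2)\1_U T)$ is justified — this last point is exactly the content of Remark \ref{katt}, so no new analysis is needed. Everything else is an unwinding of definitions and the observation that the two qpsh functions have the same unbounded locus.
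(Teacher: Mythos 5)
Your proof is correct and follows essentially the same route as the paper's: reduce to the local setting, establish that $\varphi_1-\varphi_2$ is smooth (hence the unbounded loci coincide), and then conclude from the invariance of \eqref{hemmavid}. Two small points worth tightening: (i) the assertion ``$\varphi_1-\varphi_2$ is $dd^c$ of nothing singular, so it is a smooth function'' is true but deserves a line of justification --- the paper does this via the local $dd^c$-lemma, producing a smooth $a$ with $\theta_1-\theta_2=dd^c a$ so that $\varphi_1+a-\varphi_2$ is pluriharmonic hence smooth; (ii) the equality $dd^c\big((a_1-a_2)\1_{X\setminus Z}T\big)=dd^c(a_1-a_2)\wedge\1_{X\setminus Z}T$ follows because $\1_{X\setminus Z}T$ is $d$-closed, not from Remark~\ref{katt} as you suggest (Remark~\ref{katt} concerns the case $dd^c h=0$ and is invoked by the paper only at the final cancellation, where $dd^c(\varphi_1+a)=dd^c\varphi_2$).
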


\begin{proof}
It is enough to prove \eqref{struntsak} locally in
$X$ and thus we may assume that the $dd^c$-lemma holds on $X$.
Note that $\theta_1-\theta_2=dd^c(\varphi_2-\varphi_1)$ is smooth and
$d$-closed.
Therefore, by the $dd^c$-lemma, there is a smooth
function $a$ such that $\theta_1-\theta_2=dd^c a$, i.e.~
$dd^c(\varphi_1+a)=dd^c \varphi_2.$
In particular, the difference of $\ph_1+a$ and $\ph_2$ is pluriharmonic and thus smooth, so
the unbounded loci of $\varphi_1$ and $\varphi_2$ coincide;
let us denote this set by $Z$. 
Now
\begin{multline*} 
[\theta_1 + dd^c\varphi_1]_\eta\wedge T - [\theta_2 +
dd^c\varphi_2]_\eta\wedge T 
= \\
\theta_1 \wedge \1_{X\setminus Z} T + dd^c\varphi_1 \wedge \1_{X\setminus Z} T 
- \theta_2 \wedge \1_{X\setminus Z} T - dd^c\varphi_2 \wedge
\1_{X\setminus Z} T 
=\\
 dd^c a \wedge \1_{X\setminus Z} T 
+ dd^c\varphi_1 \wedge \1_{X\setminus Z} T 
- dd^c\varphi_2 \wedge\1_{X\setminus Z} T 
=\\
dd^c (a + \varphi_1) \wedge\1_{X\setminus Z} T - dd^c\varphi_2 \wedge\1_{X\setminus Z} T = 0, 
\end{multline*} 
where the third equality follows since \eqref{hemmavid} is independent
of the decomposition $\varphi=u+a$,
and the last equality follows in view of Remark~\ref{katt} since
$dd^c(\varphi_1+a)=dd^c \varphi_2$. 
\end{proof}

We obtain the following result regarding the $d$- and
$dd^c$-cohomology for generalized Monge-Amp\`ere products; a version
of this appeared as Proposition~4.3 in \cite{LRSW}.  
\begin{prop}\label{vinter}
Assume that $\varphi$ is a qpsh function with analytic singularities, 
that $\theta$ and $\eta$ are closed $(1,1)$-forms, 
and that $T$ is a current with analytic singularities.
Moreover, assume that $\theta-\eta=d\alpha$, where $\alpha$ is a smooth
form. Then, there is a current $S$ such that 
\begin{equation}\label{valparaiso}
[\theta + dd^c\varphi]_\eta\wedge T = \theta \wedge T + dS.
\end{equation}
If moreover $\theta-\eta=dd^c a$, where $a$ is a smooth
function, then there is a current $S'$ such that 
\begin{equation}\label{skriva} 
[\theta + dd^c\varphi]_\eta\wedge T = \theta \wedge T + dd^cS'.
\end{equation}
\end{prop}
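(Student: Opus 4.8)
The plan is to reduce the statement to the Monge--Amp\`ere calculus already established and then feed it into an ordinary $dd^c$-lemma argument. First I would write $T = \sum_i \beta_i \wedge \1_{U_i} T_i$ as in Definition~\ref{current-with-as}, and observe that by linearity it suffices to treat a single summand, i.e.\ we may assume $T = \beta \wedge \1_U T_0$ with $T_0$ of the form \eqref{blomma2} (or $T_0 = 1$) and $\beta$ a closed smooth form. Unwinding \eqref{sommarskuggan}, with $Z$ the unbounded locus of $\varphi$, we have
\[
[\theta + dd^c\varphi]_\eta\wedge T = \theta \wedge \1_{X\setminus Z}T + dd^c\varphi\wedge \1_{X\setminus Z}T + \eta\wedge \1_Z T,
\]
and since $\1_{X\setminus Z}T + \1_Z T = T$ this equals $\theta\wedge T + dd^c\varphi\wedge\1_{X\setminus Z}T + (\eta - \theta)\wedge\1_Z T$. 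Thus \eqref{valparaiso} amounts to showing that
\[
R := dd^c\varphi\wedge\1_{X\setminus Z}T - (\theta - \eta)\wedge\1_Z T
\]
is $d$-exact, and \eqref{skriva} that it is $dd^c$-exact, under the respective hypotheses $\theta - \eta = d\alpha$ and $\theta - \eta = dd^c a$.

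For \eqref{skriva} this is essentially immediate: if $\theta - \eta = dd^c a$ with $a$ smooth, then $(\theta-\eta)\wedge\1_Z T = dd^c a\wedge\1_Z T = dd^c(a\1_Z T)$ because $a\1_Z T = \varphi\1_Z T$-type products obey $dd^c(a\1_Z T) = dd^c a\wedge \1_Z T$ in the sense of the calculus of Section~\ref{MAsektion} (here $a$ smooth and $\1_Z T$ a current with analytic singularities, cf.\ the remarks after Definition~\ref{current-with-as}); and $dd^c\varphi\wedge\1_{X\setminus Z}T = dd^c(\varphi\1_{X\setminus Z}T)$ by definition. Hence $R = dd^c\big(\varphi\1_{X\setminus Z}T - a\1_Z T\big) =: dd^c S'$, which gives \eqref{skriva}. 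Note one must check $\1_Z T$ really is a current with analytic singularities so that $a\1_Z T$ makes sense --- this follows since $\1_Z T = T - \1_{X\setminus Z} T$ and both terms are of the required form, $Z$ being an analytic set.

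For \eqref{valparaiso}, with only $\theta - \eta = d\alpha$, $\alpha$ smooth, the term $dd^c\varphi\wedge\1_{X\setminus Z}T = dd^c(\varphi\1_{X\setminus Z}T)$ is already $d$-exact (indeed $dd^c$-exact). For the other term I would write $(\theta-\eta)\wedge\1_Z T = d\alpha\wedge\1_Z T$; since $\1_Z T$ is closed (being a sum/difference of closed currents with analytic singularities --- recall \eqref{insta2} and \eqref{skata} produce closed currents, and $\1_{X\setminus Z}T$ is closed by the construction around \eqref{nyar}), we get $d\alpha\wedge\1_Z T = d(\alpha\wedge\1_Z T)$, using that wedging a smooth form with a closed normal current commutes with $d$ in the expected way. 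Therefore $R = d\big(d^c(\varphi\1_{X\setminus Z}T) - \alpha\wedge\1_Z T\big)$ up to sign bookkeeping, wait --- more carefully, $dd^c(\varphi\1_{X\setminus Z}T) = d\big(d^c(\varphi\1_{X\setminus Z}T)\big)$, so $R = d\big(d^c(\varphi\1_{X\setminus Z}T) - \alpha\wedge\1_Z T\big) =: dS$, proving \eqref{valparaiso}.

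The main obstacle I anticipate is purely bookkeeping: making sure that all the formal manipulations ($dd^c$ commuting with $\1_U$, with wedging by smooth closed forms, with pullback/pushforward if one localizes) are legitimate for currents with analytic singularities rather than arbitrary currents, and in particular that $\1_Z T$ and $\1_{X\setminus Z}T$ are themselves closed currents with analytic singularities so that the products $a\1_Z T$, $\varphi\1_{X\setminus Z}T$ are defined by \eqref{hemmavid}. All of this is supplied by the construction recalled in Section~\ref{MAsektion} (closedness from \cite{LRSW}*{Prop.~3.2}, the product rule $dd^c(\varphi\1_U T) = dd^c\varphi\wedge\1_U T$ from the definition, and Remark~\ref{katt} to pass freely between potentials), so no genuinely new estimate is needed --- the proof is an assembly of these facts plus the local $dd^c$-lemma to produce $\alpha$ (resp.\ $a$) in the first place if one prefers to argue locally, though here $\alpha$ and $a$ are given by hypothesis.
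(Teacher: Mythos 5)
Your proof is correct and follows essentially the same route as the paper's: unwind \eqref{sommarskuggan}, replace $\theta\wedge\1_{X\setminus Z}T$ by $\theta\wedge T - \theta\wedge\1_Z T$, combine with $\eta\wedge\1_Z T$ to isolate $(\eta-\theta)\wedge\1_Z T$, and then write the two remaining terms as $d$- (resp.\ $dd^c$-) exact currents, yielding $S = d^c(\varphi\1_{X\setminus Z}T) - \alpha\wedge\1_Z T$ and $S' = \varphi\1_{X\setminus Z}T - a\1_Z T$, exactly as in the paper. The reduction to a single summand $\beta\wedge\1_U T_0$ is unnecessary (the argument is already linear in $T$), but harmless.

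The one point where your justification is slightly off is the closedness of $\1_Z T$, which is what allows you to pull $d$ (resp.\ $dd^c$) past $\alpha\wedge$ (resp.\ $a\cdot$). You derive it from $\1_Z T = T - \1_{X\setminus Z}T$ and assert that $\1_{X\setminus Z}T$ is ``closed by the construction around \eqref{nyar}''; but \eqref{nyar} defines the product $u\1_{X\setminus Z}T$, not $\1_{X\setminus Z}T$ itself, and closedness of $\1_{X\setminus Z}T$ is not immediate from it. The clean (and the paper's) argument runs the other way: the Skoda--El~Mir theorem, applied to the closed positive factors $\1_{U_i}T_i$ in Definition~\ref{current-with-as} and the analytic set $Z$, gives directly that $\1_Z T$ is closed. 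With that citation in place your argument is complete.
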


\begin{proof}
Since $\theta-\eta=d\alpha$, 
\begin{multline*}
[\theta + dd^c\varphi]_\eta\wedge T = 
\theta \wedge \1_{X\setminus Z} T + dd^c\varphi \wedge \1_{X\setminus Z} T +
\eta\wedge \1_Z T
=\\
\theta \wedge  T + dd^c(\varphi \1_{X\setminus Z} T) +
(\eta-\theta) \wedge \1_Z T
= 
\theta \wedge  T + d \big (d^c(\varphi \1_{X\setminus Z} T) -
\alpha \wedge \1_Z T \big ), 
\end{multline*}
where we in the last equation used that $\1_Z T$ is closed by the Skoda-El Mir theorem.
Thus \eqref{valparaiso} holds with $S= d^c(\varphi \1_{X\setminus Z}
T) -
\alpha \wedge \1_Z T$. 

If $\theta-\eta=dd^c a$, then by the same arguments,
\eqref{skriva} holds with $S'=\varphi \1_{X\setminus Z} T -a \1_Z T$.
\end{proof}

Now Proposition ~\ref{host} follows immediately from Proposition
~\ref{vinter}.

\begin{remark}
Given psh functions $u_1,\ldots, u_r$, the \emph{mixed
  non-pluripolar Monge-Amp\`ere product} 
\begin{multline}\label{npp}
\big \langle (dd^c u_r)^{m_r}\wedge \cdots \wedge (dd^c u_1)^{m_1}
\big \rangle 
= \\
\lim_{j\to \infty} \1_{\bigcap_i \{u_i > -j\}}
\big  (dd^c \max (u_r, -j) \big )^{m_r} 
\wedge \cdots \wedge 
\big  (dd^c \max (u_1, -j) \big )^{m_1}, 
\end{multline} 
introduced in \cites{BT3, BEGZ},
is a closed positive current that does not charge any pluripolar set
and that is well-defined if the unbounded loci of
$u_i$ are small in a certain sense, see \cite{BEGZ}*{Section~1.2}, in
particular, if the $u_i$ have analytic singularities.

Given closed $(1,1)$-forms $\theta_i$ and $\theta_i$-psh functions $\varphi_i$, i.e.,
$\theta_i+dd^c\varphi_i\geq 0$ for $i=1,\ldots, r$ 
one can extend \eqref{npp} to define the  non-pluripolar product 
$\langle 
(\theta_r + dd^c\varphi_r)^{m_r}\wedge\cdots\wedge
(\theta_1+dd^c\varphi_1)^{m_1}\rangle$. 
If the $\varphi_i$ have analytic singularities with unbounded loci $Z_i$
and we let $Z=Z_1\cup\cdots\cup Z_r$, it follows from the construction
that 
\[
\big \langle 
(\theta_r + dd^c\varphi_r)^{m_r}\wedge\cdots\wedge
(\theta_1+dd^c\varphi_1)^{m_1}\big \rangle
=
\1_{X\setminus Z} 
[\theta_r + dd^c\varphi_r]^{m_r}_{\eta_r}\wedge \cdots 
        \wedge [\theta_1 +dd^c\varphi_1]^{m_1}_{\eta_1}, 
 \]
if $\eta_1,\ldots, \eta_r$ are closed $(1,1)$-forms, cohomologous to
$\theta_1,\ldots, \theta_r$, respectively. 
Thus the mass formula Proposition ~\ref{host} can be rephrased as 
\begin{multline*}
\int_X
\big \langle 
(\theta_r + dd^c\varphi_r)^{m_r}\wedge\cdots\wedge
(\theta_1+dd^c\varphi_1)^{m_1}\big \rangle
=\\
\int_X \theta_r^{m_r}\wedge\cdots\wedge \theta_1^{m_1} - 
\int_X\1_Z [\theta_r + dd^c\varphi_r]^{m_r}_{\eta_r}\wedge \cdots 
        \wedge [\theta_1 +dd^c\varphi_1]^{m_1}_{\eta_1},
\end{multline*} 
cf.\ \cite{ABW}*{Equation~(5.5)}. 
\end{remark}

\begin{remark}\label{sommaren} 
Note that 
$[\theta+dd^c\varphi]_\theta\wedge T= \theta \wedge T + dd^c
\varphi\wedge \1_{X\setminus Z}T$. In particular, 
\[
[\theta+dd^c\varphi]_\theta^m = 
\big (\theta + dd^c
\varphi \1_{X\setminus Z}\big )^m =
\sum_{\ell=0}^m \binom m \ell \theta^{m-\ell}\wedge (dd^c \varphi)^\ell,
\]
where we use the shorthand notation \eqref{applen} in the rightmost expression. In \cite{B} this global
Monge-Amp\`ere product was just denoted by $(\theta+dd^c\varphi)^m$. 
We prefer to use the notation
$[\theta+dd^c\varphi]_\theta^m$ to emphasize that it depends 
not only on the current $\theta+dd^c\varphi$ but also on the
decomposition as the sum of $\theta$ and $dd^c\varphi$, cf.\ Theorem~3
in \cite{B} and the following discussion. 

Alternatively, 
\begin{equation}\label{boot}
[\theta+dd^c\varphi]_\theta^m = 
\big ((\theta + dd^c
\varphi) \1_{X\setminus Z} + \theta\1_Z \big )^m =
(\theta + dd^c \varphi)^m + 
\sum_{\ell=0}^{m-1} \theta^{m-\ell}\wedge \1_Z (\theta+dd^c \varphi)^\ell. 
\end{equation} 
In particular, it follows that $[\theta+dd^c\varphi]_\theta^m$ equals the ordinary
Monge-Amp\`ere product $(\theta + dd^c \varphi)^m$, if $\varphi$ is
locally bounded. 
In \cite{ABW} the mass formula Theorem ~1.2 was formulated in terms of
the right-hand side of \eqref{boot}, see \cite{LRSW}*{Remark~3.6}. 
\end{remark}

\begin{remark}\label{kaktus} 
Assume that $L\to X$ is a holomorphic line bundle. We say that a
positive hermitian singular (in the sense of Demailly \cite{Dem2})
metric $e^{-\phi}$ on $L$ has
\emph{analytic singularities} if the local weights $\phi$ are psh functions
with analytic singularities.  Since two local weights differ by a
pluriharmonic function the first Chern form $dd^c \phi$ is a
well-defined closed positive current on $X$. 

Let $e^{-\psi}$ be a smooth metric on $L$ with first Chern form
$\theta=dd^c\psi$. Then $\varphi:= \phi-\psi$ is a well-defined qpsh
function on $X$ and 
$dd^c \varphi=dd^c\phi -\theta$, and thus if $T$ is a current with
analytic singularities on $X$, we can write  
\begin{equation}\label{baktus}
[dd^c\phi]_\theta\wedge T := [\theta + dd^c\varphi]_\theta\wedge T.
\end{equation}
In particular, if $\phi_1,\ldots, \phi_r$ are positive hermitian
metrics with analytic singularities on $L$ and $\theta_1,\ldots,
\theta_r$ are Chern forms of smooth metrics $e^{-\psi_1}, \ldots,
e^{-\psi_r}$ on $L$, we can write
\begin{equation}\label{lakrits}
[dd^c\phi_r]_{\theta_r}^{m_r}\wedge\cdots\wedge
[dd^c\phi_1]_{\theta_1}^{m_1} 
=
[\theta_r + dd^c\varphi_r]_{\theta_r}^{m_r}\wedge\cdots\wedge
[\theta_1+dd^c\varphi_1]_{\theta_1}^{m_1},
\end{equation} 
where $\varphi_i=\phi_i-\psi_i$, cf. \cite{LRSW}*{Section~4}.
\end{remark}

\section{Residue currents}
\label{sec:motivation}

In this section we give some background on (regularizations of) residue currents and relate
them to certain mixed Monge-Amp\`ere products. In particular we prove
a special case of Theorem ~\ref{huvud}.

Throughout this paper, by a \emph{cut-off function} we mean
a function $\chi : \R_{\geq 0} \to \R_{\geq 0}$ which is smooth, increasing and such 
that $\chi(t) \equiv 0$ for $t \ll 1$ and $\chi(t) \equiv 1$ for $t
\gg 1$.

In \cite{AW2} was introduced a class of so-called
\emph{pseudomeromorphic} currents that includes all smooth forms, is
closed under multiplication with smooth forms
and the following operations:
If $f$ is a holomorphic function, $Z=\{f=0\}$, $\chi$
is a cut-off function, $\chi_\epsilon:= \chi(|f|^2/\epsilon)$, 
and $T$ is a pseudomeromorphic current on $X$,
then the following are well-defined
pseudomeromorphic currents:
\begin{multline} \label{PMproducts}
    \begin{gathered}
        \frac{1}{f} ~T := \lim_{\epsilon \to 0} \frac{\chi_\epsilon}{f}~T,\quad 
    \dbar\frac{1}{f}\wedge T := \lim_{\epsilon \to 0} \frac{\dbar\chi_\epsilon}{f} \wedge T \text{ and } \quad 
    \1_{X \setminus Z} T := \lim_{\epsilon \to 0} \chi_\epsilon T, 
    \end{gathered}
\end{multline}
see also \cite{AW3}. 
Since $\1_{X \setminus Z} T=T$
outside of $Z$, and $\dbar\chi_\epsilon$ has its support outside of
$Z$, it follows that 
\begin{equation} \label{PMrestrictions}
    \frac{1}{f} f T = \1_{X \setminus Z} T \quad \text{ and }\quad 
    \dbar\frac{1}{f} \wedge \1_{X \setminus Z} T = \dbar\frac{1}{f} \wedge T.
\end{equation}

In particular, if $f_1,\dots,f_r$ are holomorphic functions, then 
\begin{equation} \label{eq:CH}
    \dbar \frac{1}{f_r} \wedge \dots \wedge \dbar \frac{1}{f_1} :=
    \lim_{\epsilon_r \to 0} \cdots \lim_{\epsilon_1 \to 0} P_\epsilon,
\end{equation}
where
\begin{equation} \label{eq:Pdef}
  P_\epsilon = \frac{\dbar\chi_{r,\epsilon_r}}{f_r} \wedge \dots
  \wedge  \frac{\dbar\chi_{1,\epsilon_1}}{f_1}, 
\end{equation}
$\epsilon=(\epsilon_1,\dots,\epsilon_r)$, and 
\begin{equation}\label{singalong}
\chi_{k,\epsilon} =
\chi(|f_k|^2/\epsilon), 
\end{equation} 
is a well-defined pseudomeromorphic current. 
Products like these were first defined by Coleff and Herrera,
\cite{CH}, and therefore, \eqref{eq:CH} 
is often referred to as the \emph{Coleff-Herrera product} of $f_1,\dots,f_r$.
The products in \cite{CH} were defined in a slightly different way,
taking one parameter limits along certain so-called admissible paths 
instead of iterated limits like in \eqref{eq:CH}. 

\begin{df}\label{df:adm-eps}
    We say that a sequence $(\epsilon_1,\dots,\epsilon_r)\colon \N \to \R_{> 0}^r$ \textit{tends to $0$ along an admissible path},
    if for any $q \geq 0$, and $k=1,\dots,r-1$,
    \begin{equation*}
        \epsilon_k(\nu)/\epsilon_{k+1}^q(\nu) \to 0
    \end{equation*}
and $\epsilon_r(\nu) \to 0$ as $\nu\to \infty$.
\end{df}

Given a sequence $(j_1,\dots,j_r)\colon \N\rightarrow\R^r$, let
$(\epsilon_1,\dots,\epsilon_r)\colon \N \to \R_{> 0}^r$ be the
sequence defined by $\epsilon_k:= e^{-j_k}$ for
$k=1,\dots,r$. 
Then note that $(\epsilon_1,\dots,\epsilon_r)$ tends to $0$ along an
admissible path if and only if $(j_1,\dots,j_r)$ tends to $\infty$
along an admissible path, see Definition ~\ref{df:adm-j}. 
If $(\epsilon_1,\dots,\epsilon_r)$ tends to $0$ along an admissible path, then it follows by \cite{LS}*{Theorem~2} that 
\begin{equation}\label{eq:CH2}
\lim_{\nu\to\infty} P_{(\epsilon_1(\nu),\dots,\epsilon_r(\nu))} = 
\lim_{\epsilon_r' \to 0} \cdots \lim_{\epsilon_1' \to 0} P_{(\epsilon_1',\dots,\epsilon_r')},
\end{equation}
where $P_\epsilon$ is defined by \eqref{eq:Pdef}.
The left-hand side thus provides a regularization of
\eqref{eq:CH} as a one parameter limit of smooth forms.

To be precise, in \cite{CH}, the product $\dbar
(1/f_r)\wedge\cdots\wedge \dbar (1/f_1)$ is defined as the limit of $P_\epsilon$ along admissible paths, 
but where $\chi=\chi_{[1,\infty)}$ is the characteristic function of
$[1,\infty)$ 
and the factor $\dbar\chi_{r,\epsilon_r} \wedge \dots \wedge \dbar
\chi_{1,\epsilon_1}$ in $P_\epsilon$ then should be interpreted as
the current of integration along $\cap \{ |f_k|^2=\epsilon_k \}$.
By combining ideas from \cite{CH} and \cite{P} one can show that
\eqref{eq:CH} in fact coincides with Coleff-Herrera's original
definition, see \cite{LS}*{Section~1}; in particular, this follows from
Theorem~11 in \cite{LS}.

\smallskip

Let $\varphi_k = \log |f_k|^2$, where $f_k$ is a holomorphic function, 
and let $Z_k=\{f_k=0\}$. Then the mixed
Monge-Amp\`ere product \eqref{hoppet} is closely related to the
Coleff-Herrera product \eqref{eq:CH}. 
Formally, if $T$ is a pseudomeromorphic current, in view of
\eqref{PMproducts}, 
\begin{equation}\label{verbal}
dd^c \varphi_k \wedge \1_{X\setminus Z_k} T=\frac{1}{2\pi i} ~\dbar \partial \varphi_k \wedge \1_{X\setminus Z_k} T=
\frac{1}{2\pi i} ~\dbar \frac{1}{f_k}\wedge \partial f_k \wedge T
\end{equation}
and so, formally, 
\begin{equation} \label{eq:severalFactors2}
    dd^c \varphi_r \wedge \dots \wedge dd^c \varphi_1 =
    \dbar\frac{1}{f_r} \wedge \dots \wedge \dbar \frac{1}{f_1} \wedge
    \Theta, ~~~~ \text{ where } \Theta =  \frac{1}{(2\pi i)^r} 
\partial f_1 \wedge \dots \wedge \partial f_r. 
\end{equation}
To give a rigorous proof of \eqref{eq:severalFactors2}, let $\rho$ and
$\rho_j$ be as in Definition~\ref{df:rho} and let $\chi = \rho' \circ \log$. Then note that $\chi$ is a cut-off function and $(\rho_j'\circ \log)(t) = \chi(te^j)$. 
Then 
\begin{equation}\label{gronrutig}
\rho_j'\circ \varphi_k=\rho_j'(\log |f_k|^2) = 
\chi(|f_k|^2 e^j) = \chi_{k, e^{-j}}
\end{equation} 
see \eqref{singalong}.  
Thus 
\begin{equation}\label{blarutig} 
\partial(\rho_j \circ \varphi_k)
= 
\rho_j'\circ \varphi_k ~ \partial \varphi_k 
=
\chi_{k, e^{-j}}
\frac{\partial f_k}{f_k}. 
\end{equation}
Since $\partial f_k /f_k$ is holomorphic on the support of 
$\chi_{k, e^{-j}}$ 
it follows that 
\begin{equation}\label{gulrutig} 
    dd^c (\rho_j \circ \varphi_k) = \frac{1}{2\pi i}
\frac{\dbar\chi_{k, e^{-j}}}{f_k} \wedge \partial f_k. 
\end{equation}
Now, let $(\epsilon_1,\ldots, \epsilon_r)$ be defined by
$\epsilon_k=e^{-j_k}$. Then 
\begin{equation}\label{halsont}
        dd^c(\rho_{j_r}\circ\ph_r)\wedge\dots\wedge
        dd^c(\rho_{j_1}\circ\ph_1) = P_\epsilon \wedge \Theta, 
\end{equation} 
cf.\ \eqref{eq:CH} and \eqref{eq:severalFactors2}. 
Taking iterated limits $\lim_{j_r\to \infty}\cdots \lim_{j_1\to\infty}$ of 
both sides of \eqref{halsont}, in view of \eqref{bacill} and
\eqref{eq:CH}, we get \eqref{eq:severalFactors2}. 

\begin{remark}\label{delsjon}
If $\chi$ is a
cut-off function, then note that $\rho(t):=\int_0^t \chi(e^s)
ds +c$ is as in Definition~\ref{df:rho} for an appropriate choice of
constant $c$ and that $\rho'(\log t)=\chi(t)$. 
\end{remark}

Note that Theorem~\ref{huvud} in this case, when
$\varphi_k=\log|f_k|^2$ and $m_k=1$ for $k=1,\ldots, r$, follows
directly from \eqref{halsont}, \eqref{eq:CH2}, and
\eqref{bacill}, using that
$(j_1,\ldots,j_r)$ tends to $\infty$ along an admissible path if and
only if $(\epsilon_1,\ldots, \epsilon_r)$ tends to $0$ along an
admissible path. 

\begin{remark}\label{bobby} 
The reason that we require $\rho$ and $\rho_j$ in Definition ~\ref{df:rho} to be
slightly more restrictive than in \cites{ABW, B} is that then $\chi$
defined above is a cut-off function, which is used in, e.g., \cites{P,LS}. Possibly
the results (we need) in \cite{LS} could be extended to more
general $\chi$ that would correspond to more general $\rho$. 
\end{remark}

Next, let us consider functions of the form $\varphi_k = c_k \log
|f_k|^2 + v_k$, where $c_k>0$, $f_k$ is a single holomorphic function, and $v_k$ is smooth.
In fact, after a principalization and resolution of singularities, any qpsh function with 
analytic singularities is of this form. 
Then formally, using \eqref{PMrestrictions},  
\begin{equation*}
dd^c \varphi_k \wedge \1_{X\setminus Z_k} T
=
\left ( \frac{c_k}{2\pi i} ~\dbar \frac{1}{f_k}\wedge \partial f_k
+\frac{1}{f_k}\cdot f_k dd^c v_k \right ) \wedge T, 
\end{equation*}
cf.\ \eqref{verbal}, so that, formally,  
\begin{multline} \label{MMA-CHP1}
      dd^c\varphi_r \wedge\dots\wedge dd^c\varphi_1= 
\\
\left ( \frac{c_r}{2\pi i} ~\dbar \frac{1}{f_r}\wedge \partial f_r
+\frac{1}{f_r}\cdot f_r dd^c v_r \right )
\wedge\cdots\wedge  
\left ( \frac{c_1}{2\pi i} ~\dbar \frac{1}{f_1}\wedge \partial f_1
+\frac{1}{f_1}\cdot f_1 dd^c v_1 \right )
  \end{multline}
The right-hand side of \eqref{MMA-CHP1} may be approximated in a
similar way as above, cf.\ \eqref{ddcRho} below, and Theorem~\ref{huvud} in this situation may then be proved
using Proposition~\ref{LSvariant}, which is a generalization of
\eqref{eq:CH2} that allows for products of factors which are either
$\dbar(1/f_k)$ or $1/f_k$.

\section{Regularizations of mixed Monge-Amp\`ere products}
\label{sec:huvud}

In this section we prove Theorems ~\ref{huvud} and 
~\ref{paron}. 
In fact, we prove the following more general result. 
\begin{thm}\label{paronmos}
    Assume that $\varphi_1,\ldots, \varphi_r$ are qpsh functions with
    analytic singularities, that $\theta_1,\ldots, \theta_r$ and $\eta_1,\ldots, \eta_r$ are 
    closed $(1,1)$-forms, that $m_1,\dots,m_r \geq 1$, and that $\rho_j$ is as in Definition~\ref{df:rho}.
    Let
    \begin{equation}\label{herde}
    	\alpha_j^{(k)}=\big(\eta_k+\rho'_j\circ\ph_k\cdot(\theta_k-\eta_k)
        +dd^c(\rho_j\circ\ph_k)\big)^{m_k}.
            \end{equation}
    Assume that the sequence $(j_1,\dotsc,j_r)\colon \N\rightarrow
    \R^r$ tends to $\infty$ along an admissible path. 
    Then
    \begin{equation} \label{eq:Amazon}
        \lim_{\nu\rightarrow \infty} \alpha^{(r)}_{j_r(\nu)}\wedge \dots \wedge\alpha^{(1)}_{j_1(\nu)}
        =[\theta_r + dd^c\varphi_r]^{m_r}_{\eta_r}\wedge \cdots \wedge [\theta_1 +dd^c\varphi_1]^{m_1}_{\eta_1}.
    \end{equation}
\end{thm}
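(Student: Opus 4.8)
The plan is to reduce Theorem~\ref{paronmos} to a regularization statement for (products of) residue-type currents via resolution of singularities, and then invoke a Coleff--Herrera-type limit theorem along admissible paths. First I would observe that the statement is local on $X$, and that by applying a principalization/log resolution $\pi\colon X'\to X$ simultaneously for $\varphi_1,\dots,\varphi_r$ (using Remark~\ref{hund} to push forward, together with the fact that $\pi^*(\rho_j\circ\varphi_k)=\rho_j\circ\pi^*\varphi_k$ and that $\pi^*$ commutes with $dd^c$ and with wedging by smooth forms), we may assume each $\varphi_k=c_k\log|f_k|^2+v_k$ with $f_k$ a single holomorphic function (a monomial in suitable coordinates) and $v_k$ smooth. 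Here one must check that the convergence statement \eqref{eq:Amazon} on $X$ follows from the corresponding statement for the pulled-back data on $X'$: pushforward is continuous for the relevant masses, and the right-hand side transforms correctly by Remark~\ref{hund} and Lemma~\ref{fjutt}.

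Next I would compute, as in \eqref{gulrutig}, the smooth form $\alpha_j^{(k)}$ explicitly in terms of the cut-off $\chi=\rho'\circ\log$. Writing $\chi_{k,e^{-j}}=\rho_j'\circ\varphi_k$ and noting $\partial(\rho_j\circ\varphi_k)=c_k\,\chi_{k,e^{-j}}\,\partial f_k/f_k+\rho_j'\circ\varphi_k\cdot\partial v_k$, one gets that the $(1,1)$-form inside the $m_k$-th power in \eqref{herde} is
\begin{equation*}
\eta_k+\chi_{k,e^{-j}}(\theta_k-\eta_k)+\frac{c_k}{2\pi i}\frac{\dbar\chi_{k,e^{-j}}}{f_k}\wedge\partial f_k+\chi_{k,e^{-j}}\,dd^cv_k+\text{(smooth terms supported where $\chi_{k,e^{-j}}$ is not locally constant)}.
\end{equation*}
Expanding the $m_k$-th powers and the full wedge product $\alpha_{j_r}^{(r)}\wedge\cdots\wedge\alpha_{j_1}^{(1)}$, I would collect terms: each term is a universal smooth form wedged with a product of factors of the type $\dbar\chi_{k,\epsilon_k}/f_k\wedge\partial f_k$ and $\chi_{k,\epsilon_k}$ (and their lower-order companions), with $\epsilon_k=e^{-j_k}$. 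The key point is that along an admissible path for $(j_1,\dots,j_r)$ — equivalently, along an admissible path for $(\epsilon_1,\dots,\epsilon_r)=(e^{-j_1},\dots,e^{-j_r})$ — each such product converges, by (the stated generalization Proposition~\ref{LSvariant} of) \cite{LS}*{Theorem~2}, to the corresponding iterated limit, which by \eqref{bacill}, \eqref{karius} and the structure of \eqref{MMA-CHP1} is exactly the contribution to $[\theta_r+dd^c\varphi_r]^{m_r}_{\eta_r}\wedge\cdots\wedge[\theta_1+dd^c\varphi_1]^{m_1}_{\eta_1}$. Summing the terms gives \eqref{eq:Amazon}.

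The main obstacle I anticipate is two-fold and lies in the bookkeeping of the expansion together with the precise form of the limit theorem. On the one hand, one must verify that every term produced by expanding the powers and products genuinely falls under the hypotheses of Proposition~\ref{LSvariant} — in particular that factors of the form $\chi_{k,\epsilon_k}$ (no $\dbar$) and the smooth ``error'' terms proportional to $dd^cv_k$ or to $\theta_k-\eta_k$ are handled, and that the ordering of the factors matches the nesting order of the admissible path (the noncommutativity of \eqref{hoppet} makes this delicate: the factor with index $r$ must sit outermost). On the other hand, one must match the iterated-limit value of each term with the recursive definition \eqref{sommarskuggan}–\eqref{herde} of the right-hand side, using \eqref{karius} to see that $dd^c(\rho_{j_k}\circ\varphi_k)\wedge(\cdot)\to dd^c\varphi_k\wedge\1_{X\setminus Z_k}(\cdot)$ while the $\eta_k$-term picks up the $\1_{Z_k}$-part, and checking that the terms carrying $\chi_{k,\epsilon_k}$ without $\dbar$ contribute $\eta_k\wedge\1_{Z_k}(\cdot)$ in the limit (since $\chi_{k,\epsilon_k}\to\1_{X\setminus Z_k}$ but the factor it multiplies, after taking the inner limits, is supported so that the complementary piece $\eta_k\wedge\1_{Z_k}$ survives). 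Once this correspondence is established term by term, reassembling the sum and pushing forward by $\pi_*$ completes the proof; the independence of all limits from the various decompositions is guaranteed by Lemma~\ref{fjutt} and Remark~\ref{katt}.
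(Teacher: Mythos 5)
Your proposal takes essentially the same approach as the paper: localize, use principalization/resolution of singularities together with Remark~\ref{hund} to reduce to $\varphi_k = c_k\log|f_k|^2+v_k$ with $f_k$ a single holomorphic function, expand each factor $\alpha_{j_k}^{(k)}$ into residue-type pieces, and apply Proposition~\ref{LSvariant} to pass from the iterated to the one-parameter admissible limit. The two bookkeeping concerns you flag are precisely what the paper's Lemma~\ref{socker} settles: the identity $\ell\chi_{\epsilon}^{\ell-1}\dbar\chi_{\epsilon}=\dbar(\chi_{\epsilon}^{\ell})$, together with the fact that $\chi^{\ell}$ is again a cut-off function, show that every summand of the expansion is of the form $(\dbar\chi_{k,\epsilon}^{\ell}/f_k)\wedge\Theta_1$ or $(\chi_{k,\epsilon}^{\ell}/f_k)\cdot\Theta_2$ with $\Theta_i$ smooth, so Proposition~\ref{LSvariant} applies to every term; and the $\eta_k\wedge\1_{Z_k}$ contribution arises not from the $\chi$-terms (those give $\1_{X\setminus Z_k}$-pieces) but from the constant $\eta_k^{m_k}$ (i.e., $\ell=0$) term, whose $\1_{Z_k}$-part is what remains after the $\ell\ge 1$ terms recombine with $\eta_k^{m_k}\1_{X\setminus Z_k}$ to form $(\theta_k+dd^c\varphi_k)^{m_k}\wedge\1_{X\setminus Z_k}$.
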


Theorem ~\ref{huvud} then corresponds to $\theta_i=\eta_i=0$ and
Theorem ~\ref{paron} corresponds to $\theta_i=\eta_i$ for
$i=1,\ldots, r$.  

The proof is essentially an elaboration of the proof of the special
case of Theorem ~\ref{huvud} in the previous section. Before giving
the proof we need some preparatory results. 
First, let us assume that 
$\varphi_k$ is of the form 
\begin{equation}\label{allihopa}
\varphi_k = c_k \log |f_k|^2 + v_k, 
\end{equation}
where $c_k$ is a positive constant, $f_k$ is a tuple of holomorphic
functions, and $v_k$ is smooth. 
Let $\rho$ and $\rho_j$ be as in Definition~\ref{df:rho}.
Let $\chi$ be the cut-off function $\chi := \rho \circ \log$,
let $\epsilon_j:=e^{-j}$, and let 
\begin{equation}\label{boman}
\chi_{k,\epsilon} :=
\chi(|f_k|^{2c_k}e^{v_k}/\epsilon), 
\end{equation}
cf.\ \eqref{singalong}\footnote{Note that \eqref{singalong}
  corresponds to $c_k=1$ and $v_k=0$ in \eqref{allihopa}.}. 
Then,
\begin{equation}\label{hundra}
\rho_j'\circ \varphi_k=\rho_j'(c_k\log |f_k|^2+v_k) = (\rho_j'\circ\log)(|f_k|^{2c_k} e^{v_k}) =
\chi(|f_k|^{2c_k}e^{v_k} e^j) =\chi_{k,\epsilon_j},   
\end{equation} 
cf.\ \eqref{gronrutig}, and thus 
 \begin{equation*}
    	\alpha_j^{(k)}=\big(\eta_k+\chi_{k,\epsilon_j}\cdot(\theta_k-\eta_k)
        +dd^c(\rho_j\circ\ph_k)\big)^{m_k}. 
\end{equation*}

Next, assume that $\ph_k$ is a qpsh function of the form \eqref{allihopa}, but
    where $f_k$ is a single holomorphic function.
Also, let us drop the index $k$ and assume that $\varphi$ is a function of
the form 
\begin{equation*}
\varphi = c\log |f|^2+v, 
\end{equation*} 
where $f$ is a holomorphic function, $c>0$, and $v$ is smooth, and
write $\chi_{\epsilon} =
\chi(|f|^{2c}e^v/\epsilon)$. Moreover, let $Z=\{f=0\}$ denote the
unbounded locus of $\varphi$. 
Then it follows from \eqref{hundra} that 
\begin{equation*}
\partial(\rho_j \circ \varphi)
= 
\rho_j'\circ \varphi ~ \partial \varphi 
=
\chi_{\epsilon_j}\cdot\left(c\frac{\partial f}{f} + \partial
  v\right), 
\end{equation*} 
cf.\ \eqref{blarutig}. 
Since $\partial f /f$ is holomorphic on the support of $\dbar
\chi_{\epsilon_j}$ it follows that 
\begin{equation} \label{ddcRho}
    dd^c (\rho_j \circ \varphi) = \frac{1}{2\pi i}\dbar\left( \chi_{\epsilon_j} \left(c\frac{\partial f}{f}+\partial v\right) \right) =
    \dbar \chi_{\epsilon_j} \wedge \frac{1}{2\pi i}\left(c\frac{\partial f}{f} + \partial v\right)
    + \chi_{\epsilon_j} dd^c v, 
\end{equation}
cf.\ \eqref{gulrutig}.

\begin{lma} \label{lma:residueVsMA}
   Assume that $T$ is a current with analytic singularities. 
    Then,
    \begin{equation}\label{eq:partial-ddcRho}
\lim_{j\to \infty}\dbar \chi_{\epsilon_j} \wedge \frac{1}{2\pi i}\left( c\frac{\partial f}{f}+\partial v\right) \wedge T\to dd^c (c\log |f|^2) \wedge \1_{X\setminus Z} T.
    \end{equation}
    \end{lma}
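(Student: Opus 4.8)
The plan is to read the lemma off from the pointwise identity \eqref{ddcRho} and the already established convergence \eqref{karius}, rather than analysing $\dbar\chi_{\epsilon_j}$ by hand.

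First I would wedge \eqref{ddcRho} with $T$ and rewrite it, for each fixed $j$, as
\[
\dbar\chi_{\epsilon_j}\wedge\tfrac{1}{2\pi i}\Bigl(c\tfrac{\partial f}{f}+\partial v\Bigr)\wedge T = dd^c(\rho_j\circ\varphi)\wedge T - \chi_{\epsilon_j}\,dd^c v\wedge T .
\]
Every term is legitimate: $\dbar\chi_{\epsilon_j}$ is supported off $Z=\{f=0\}$, where $\partial f/f$ is holomorphic, so the left-hand side is a smooth form wedged with the pseudomeromorphic current $T$. It then suffices to let $j\to\infty$ on the right.

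For the first term on the right I would invoke \eqref{karius}, which gives $dd^c(\rho_j\circ\varphi)\wedge T\to dd^c\varphi\wedge\1_{X\setminus Z}T$. For the second, I would write $\chi_{\epsilon_j}\,dd^c v\wedge T=dd^c v\wedge(\chi_{\epsilon_j}T)$ and use $\chi_{\epsilon_j}T\to\1_{X\setminus Z}T$: since $\chi_{\epsilon_j}=\chi(|f|^{2c}e^v/\epsilon_j)$ is a cut-off function evaluated at a continuous function vanishing precisely on $Z$, this is the standard behaviour of the operation $\1_{X\setminus Z}$ on pseudomeromorphic currents recalled in \eqref{PMproducts} (to be pedantic, one reduces to the cut-offs $\chi(|f|^2/\epsilon')$ using the monotonicity of $\chi$ and that $e^v$ is smooth and strictly positive; in any event this is already implicit in \eqref{karius}). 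Hence the right-hand side converges to $dd^c\varphi\wedge\1_{X\setminus Z}T-dd^c v\wedge\1_{X\setminus Z}T$.

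It remains to identify this with $dd^c(c\log|f|^2)\wedge\1_{X\setminus Z}T$. By \eqref{hemmavid}, applied to the decomposition $\varphi=u+a$ with $u=c\log|f|^2$ and $a=v$, one has $\varphi\1_{X\setminus Z}T=(c\log|f|^2)\1_{X\setminus Z}T+v\1_{X\setminus Z}T$; and since $S:=\1_{X\setminus Z}T$ is closed (by the Skoda-El Mir theorem, as $\1_Z T$ is closed), $dd^c(vS)=dd^c v\wedge S$. Therefore
\[
dd^c\varphi\wedge\1_{X\setminus Z}T=dd^c(\varphi S)=dd^c(c\log|f|^2)\wedge\1_{X\setminus Z}T+dd^c v\wedge\1_{X\setminus Z}T,
\]
and subtracting $dd^c v\wedge\1_{X\setminus Z}T$ gives exactly the asserted limit. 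In this approach there is no real obstacle: the substance is carried by \eqref{ddcRho} and \eqref{karius}, and the only step asking for a word of justification --- hence the one I would flag as mildly delicate --- is the convergence $\chi_{\epsilon_j}T\to\1_{X\setminus Z}T$ for the cut-offs built from the non-holomorphic weight $|f|^{2c}e^v$, which is nonetheless routine.
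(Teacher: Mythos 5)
Your proof is correct and follows essentially the same approach as the paper's: wedge \eqref{ddcRho} with $T$, pass to the limit using \eqref{karius} on the $dd^c(\rho_j\circ\varphi)\wedge T$ term and the standard restriction behaviour of $\chi_{\epsilon_j}T$ on the $\chi_{\epsilon_j}\,dd^cv\wedge T$ term, and then identify $dd^c\varphi\wedge\1_{X\setminus Z}T-dd^cv\wedge\1_{X\setminus Z}T$ with $dd^c(c\log|f|^2)\wedge\1_{X\setminus Z}T$ via the independence of \eqref{hemmavid} of the decomposition $\varphi=u+a$. The only difference is that you spell out the last identification more carefully (via the closedness of $\1_{X\setminus Z}T$), whereas the paper leaves this implicit; there is no gap.
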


\begin{proof}
Using \eqref{ddcRho} and \eqref{karius}, we get 
\begin{multline*}
\lim_{j\to \infty} \dbar \chi_{\epsilon_j} \wedge \frac{1}{2\pi i}\left( c\frac{\partial
    f}{f}+\partial v\right) \wedge T= 
\lim_{j\to \infty} dd^c (\rho_j \circ \varphi) \wedge T -\lim_{j\to \infty} \chi_{\epsilon_j}
dd^c v \wedge T = \\
dd^c \varphi \wedge \1_{X\setminus Z} T - dd^c v\wedge\1_{X\minus Z} T. 
\end{multline*}
Since \eqref{hemmavid} is independent of the decomposition
$\varphi=u+a$, 
it follows that  $dd^c \ph \wedge \1_{X\setminus Z} T - dd^c v \wedge
     \1_{X\setminus Z} T = dd^c (c\log |f|^2) \wedge \1_{X\setminus Z}
     T$, and thus \eqref{eq:partial-ddcRho} follows. 
\end{proof}

\begin{lma}\label{socker}
    Assume that $\theta$ and $\eta$ are 
    closed $(1,1)$-forms and let 
\begin{equation*}
\alpha_j =
\big(\eta+\chi_{\epsilon_j}\cdot(\theta-\eta)+dd^c(\rho_{j}\circ\ph)\big)^{m}. 
\end{equation*} 
    Then there exist smooth forms $\Theta_{\ell,1}$ and $\Theta_{\ell,2}$, $\ell=1,\dotsc,m$, independent of $j$, such that
    \begin{equation*}
      \alpha_j
        =\eta^m+\sum_{\ell=1}^m \left(\frac{\dbar \chi_{\epsilon_j}^\ell}{f} \wedge \Theta_{\ell,1} + \frac{\chi_{\epsilon_j}^\ell}{f}\cdot \Theta_{\ell,2}  \right). 
    \end{equation*}
   
 Furthermore, if $T$ is a current with analytic singularities, then
    \begin{equation*}
        \lim_{j\rightarrow \infty} \alpha_j \wedge T    
        =[\theta +dd^c\varphi]^{m}_{\eta}\wedge T. 
    \end{equation*}
\end{lma}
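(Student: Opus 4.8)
The plan is to establish the two assertions in turn: the algebraic identity by a multinomial expansion, and the limit by passing to the limit in that identity term by term.

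For the identity, note first that by \eqref{ddcRho} the base of the $m$-th power defining $\alpha_j$ can be written as
\[
 \eta+\chi_{\epsilon_j}(\theta-\eta)+dd^c(\rho_j\circ\ph)=\eta+\chi_{\epsilon_j}\gamma+\dbar\chi_{\epsilon_j}\wedge\beta,
\]
where $\gamma:=\theta-\eta+dd^cv$ is smooth and closed and $\beta:=\frac{1}{2\pi i}\bigl(c\tfrac{\partial f}{f}+\partial v\bigr)=\tilde\beta/f$ with $\tilde\beta:=\frac{1}{2\pi i}(c\,\partial f+f\,\partial v)$ smooth. The three summands have degree $2$, hence commute, and $(\dbar\chi_{\epsilon_j}\wedge\beta)^{2}=0$, so the multinomial theorem expresses $\alpha_j$ as $\eta^m$ plus the terms $\binom{m}{\ell}\chi_{\epsilon_j}^{\ell}\,\eta^{m-\ell}\wedge\gamma^{\ell}$ with $\ell\ge 1$ (from powers of $\eta+\chi_{\epsilon_j}\gamma$) plus the terms containing exactly one factor $\dbar\chi_{\epsilon_j}\wedge\beta$; in the latter, $\chi_{\epsilon_j}^{\ell-1}\dbar\chi_{\epsilon_j}=\tfrac1\ell\dbar(\chi_{\epsilon_j}^{\ell})$ lets one collect them as $\binom{m}{\ell}\dbar(\chi_{\epsilon_j}^{\ell})\wedge\eta^{m-\ell}\wedge\gamma^{\ell-1}\wedge\beta$. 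Since $\chi_{\epsilon_j}^\ell$, and hence $\dbar(\chi_{\epsilon_j}^\ell)$, vanishes near $Z=\{f=0\}$, dividing and multiplying by $f$ puts these in the asserted form, with $\Theta_{\ell,1}:=\binom{m}{\ell}\,\eta^{m-\ell}\wedge\gamma^{\ell-1}\wedge\tilde\beta$ and $\Theta_{\ell,2}:=\binom{m}{\ell}\,f\,\eta^{m-\ell}\wedge\gamma^{\ell}$, which are smooth and independent of $j$.

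For the convergence I wedge the identity with $T$ and let $j\to\infty$. As $\chi^{\ell}$ is again a cut-off function, by Remark~\ref{delsjon} it equals $(\rho^{(\ell)})'\circ\log$ for some $\rho^{(\ell)}$ as in Definition~\ref{df:rho}, so that $\chi_{\epsilon_j}^{\ell}=(\rho_j^{(\ell)})'\circ\ph$; applying Lemma~\ref{lma:residueVsMA} with $\rho^{(\ell)}$ in place of $\rho$ yields $\dbar(\chi_{\epsilon_j}^{\ell})\wedge\beta\wedge T\to dd^c(c\log|f|^2)\wedge\1_{X\setminus Z}T$. Commuting the even-degree smooth factor $\eta^{m-\ell}\wedge\gamma^{\ell-1}$ past $\dbar(\chi_{\epsilon_j}^{\ell})$ then gives the limit of each residue term. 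For the remaining terms, $\chi_{\epsilon_j}^{\ell}T\to\1_{X\setminus Z}T$ (currents with analytic singularities being pseudomeromorphic, this is the defining property of $\1_{X\setminus Z}$, cf.\ \eqref{PMproducts}, which is insensitive to the precise cut-off and to replacing $|f|^2$ by $|f|^{2c}e^v$), so $\chi_{\epsilon_j}^{\ell}\,\eta^{m-\ell}\wedge\gamma^{\ell}\wedge T\to\eta^{m-\ell}\wedge\gamma^{\ell}\wedge\1_{X\setminus Z}T$. Collecting everything, and using $\gamma+dd^c(c\log|f|^2)=\theta-\eta+dd^c\ph$, one obtains
\[
 \lim_{j\to\infty}\alpha_j\wedge T=\eta^m\wedge T+\sum_{\ell=1}^{m}\binom{m}{\ell}\,\eta^{m-\ell}\wedge\gamma^{\ell-1}\wedge(\theta-\eta+dd^c\ph)\wedge\1_{X\setminus Z}T.
\]

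Finally, one must identify the right-hand side with $[\theta+dd^c\ph]_{\eta}^{m}\wedge T$, which I would do by induction on $m$, unwinding \eqref{sommarskuggan}, which here reads $[\theta+dd^c\ph]_\eta\wedge S=\eta\wedge S+(\theta-\eta+dd^c\ph)\wedge\1_{X\setminus Z}S$. The key point is that $\1_{X\setminus Z}\bigl([\theta+dd^c\ph]_{\eta}^{m-1}\wedge T\bigr)=(\eta+\gamma)^{m-1}\wedge\1_{X\setminus Z}T$; granting this, one step of \eqref{sommarskuggan} together with a Pascal-rule computation reproduces the displayed expression. To prove the key point one uses that $\1_{X\setminus Z}$ applied to any current with analytic singularities is $d$-closed by the Skoda--El~Mir theorem (as in the proof of Proposition~\ref{vinter}); since $f$ is a \emph{single} holomorphic function, $c\log|f|^2$ is pluriharmonic on $X\setminus Z$, so $dd^c(c\log|f|^2)\wedge S$ is supported on $Z$ for every $d$-closed current $S$ with analytic singularities, and therefore $\1_{X\setminus Z}\bigl(dd^c(c\log|f|^2)\wedge\1_{X\setminus Z}(\cdot)\bigr)=0$. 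This closedness-and-support bookkeeping --- which is exactly where the single-function hypothesis is used --- is the only delicate step; the rest is the residue-current manipulation already carried out for Lemma~\ref{lma:residueVsMA}.
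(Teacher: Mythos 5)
Your proof is correct and follows essentially the same route as the paper: the same multinomial expansion built on \eqref{ddcRho}, with $\Theta_{\ell,1}$ and $\Theta_{\ell,2}$ identical to those the paper produces, the same appeal to Lemma~\ref{lma:residueVsMA}, and the same reliance on the pluriharmonicity of $c\log|f|^2$ off $Z=\{f=0\}$ (equivalently, $\1_{X\setminus Z}\big(dd^c\log|f|^2\wedge\1_{X\setminus Z}S\big)=0$). The only cosmetic difference is in the final identification: the paper proves \eqref{eq:Inn} by induction over $\ell$ and then applies the binomial theorem at the operator level, while you establish the equivalent fact $\1_{X\setminus Z}\big([\theta+dd^c\ph]_\eta^{m-1}\wedge T\big)=(\eta+\gamma)^{m-1}\wedge\1_{X\setminus Z}T$ and close with an induction on $m$ via Pascal's rule --- the same content, differently packaged.
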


\begin{proof}
First note that 
         \begin{equation*}
        \alpha_j 
        = \sum_{\ell=0}^m \binom m \ell \eta^{m-\ell}\wedge
        \big(\chi_{\epsilon_j}\cdot(\theta-\eta)+dd^c(\rho_j\circ\ph)\big)^\ell. 
        \end{equation*}
Set $\beta=\theta-\eta+dd^c v$. Then by 
  \eqref{ddcRho} 
\begin{equation*}
 \chi_{\epsilon_j}\cdot(\theta-\eta)+dd^c(\rho_j\circ\ph)
        =\chi_{\epsilon_j}\beta+ \dbar \chi_{\epsilon_j}\wedge\frac{1}{2\pi i} \left( c\frac{\partial f}{f}+\partial v\right)
    \end{equation*}
and using that $(\dbar \chi_{\epsilon_j})^2=0$ and
$\ell\chi_{\epsilon_j}^{\ell-1}\dbar
\chi_{\epsilon_j}=\dbar\chi_{\epsilon_j}^{\ell}$ we get 
\begin{equation*}
       \big ( \chi_{\epsilon_j}\cdot(\theta-\eta)+dd^c(\rho_j\circ\ph)
       \big )^\ell
=     \left(\chi_{\epsilon_j}^\ell\beta+ \dbar
  \chi_{\epsilon_j}^\ell\wedge\frac{1}{2\pi i} \left( c\frac{\partial
      f}{f}+\partial v\right)\right) \wedge \beta^{\ell-1}. 
      \end{equation*}

Thus 
\begin{equation}\label{eq:Yamuna} 
        \alpha_j 
        = \eta^m + \sum_{\ell=1}^m \binom m \ell \eta^{m-\ell}\wedge
\left(\chi_{\epsilon_j}^\ell\beta+ \dbar
  \chi_{\epsilon_j}^\ell\wedge\frac{1}{2\pi i} \left( c\frac{\partial
      f}{f}+\partial v\right)\right) \wedge \beta^{\ell-1},  
        \end{equation}
so that $\alpha_j$ is of the desired form with 
$\Theta_{\ell,1}:=\binom m \ell \eta^{m-\ell}\wedge \beta^{\ell-1} \wedge
\frac{1}{2\pi i}(c{\partial f}+f\partial v)$ and $\Theta_{\ell,2}:=\binom
m \ell \eta^{m-\ell}\wedge \beta^{\ell}\cdot f.$

\smallskip 
  
Next, by \eqref{PMproducts} and Lemma ~\ref{lma:residueVsMA}, since
$\chi^\ell$ is a cut-off function whenever $\chi$ is, 
 \begin{equation}\label{eq:Sava}
        \lim_{j\rightarrow \infty}
        \left(\chi_{\epsilon_j}^\ell\beta+ \dbar \chi_{\epsilon_j}^\ell\wedge \frac{1}{2\pi i} \left( c\frac{\partial f}{f}+\partial v\right)\right)
        \wedge T
        = (\beta + c\,dd^c\log|f|^2)\wedge\1_{X\setminus Z} T.
    \end{equation}
Since $\1_{X\setminus Z} dd^c\log |f|^2 \wedge T=0$ and
$\beta$ is smooth, 
by induction over $\ell$ we get that 
    \begin{equation}\label{eq:Inn}
   \big((\beta + c\,dd^c\log|f|^2)\wedge\1_{X\setminus Z}\big)^\ell\wedge
   T =     (\beta + c\,dd^c\log|f|^2)\wedge\1_{X\setminus Z}\beta^{\ell-1} \wedge T.
    \end{equation}
    Combining \eqref{eq:Yamuna}, \eqref{eq:Sava} and \eqref{eq:Inn} we
    get 
    \begin{multline*}
        \lim_{j\rightarrow \infty}\alpha_j\wedge T 
        =
\eta^m\wedge T+\sum_{\ell=1}^m \binom m \ell \eta^{m-\ell}\wedge (\beta+c\,dd^c \log |f|^2)\wedge   \1_{X\setminus Z} \beta^{\ell-1} T
=\\
\eta^m\wedge T+\sum_{\ell=1}^m \binom m \ell \eta^{m-\ell}\wedge \big
        ((\beta+c\,dd^c \log |f|^2)\wedge\1_{X\setminus Z} \big )^\ell T
=\\ 
\big ( \eta + (\beta+c\,dd^c \log |f|^2)\wedge\1_{X\setminus Z} \big
)^m \wedge T
        =\big(\eta\1_Z+(\theta+dd^c\ph)\1_{X\setminus Z}\big)^{m} \wedge T
        = \\ [\theta+dd^c\ph]_{\eta}^m\wedge T. 
    \end{multline*}	
\end{proof}

To prove Theorem ~\ref{paronmos} we need the following more general
version of \eqref{eq:CH2}, which essentially follows from the proof of
Theorem~11 in \cite{LS}.

\begin{prop} \label{LSvariant}
    Let $P_k^{\epsilon}$ be either $\tilde\chi_{k,\epsilon}/f_k$ or $\dbar\tilde\chi_{k,\epsilon}/f_k$, where $\epsilon > 0$, 
    $f_k$ is a holomorphic function and $\tilde\chi_{k,\epsilon} = \chi_k(|f_k|^{2c_k}e^{v_k}/\epsilon)$, where $\chi_k$ is a cut-off function, $c_k > 0$, and 
    $v_k$ is smooth, for $k=1,\dots,r$. For any $(\epsilon_1,\dots,\epsilon_r)$ that tends to $0$ along an admissible path,
    \begin{equation*}
\lim_{\nu\to\infty} P_r^{\epsilon_r(\nu)} \wedge \dots \wedge
P_1^{\epsilon_1(\nu)} = 
        \lim_{\epsilon_r'\to 0} \cdots \lim_{\epsilon_1' \to 0}
        P_r^{\epsilon_r'} \wedge \dots \wedge P_1^{\epsilon_1'}. 
    \end{equation*}
\end{prop}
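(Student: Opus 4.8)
The plan is to reduce Proposition~\ref{LSvariant} to the already-established regularization result for Coleff--Herrera products, essentially \eqref{eq:CH2} (that is, \cite{LS}*{Theorem~2}) together with a mild extension allowing mixed factors $1/f_k$ and $\dbar(1/f_k)$, which is what is attributed to the proof of \cite{LS}*{Theorem~11}. The first reduction is cosmetic: replace the weight $\tilde\chi_{k,\epsilon}=\chi_k(|f_k|^{2c_k}e^{v_k}/\epsilon)$ by an ordinary cut-off of $|f_k|^2$. Since $c_k>0$ and $v_k$ is smooth (hence bounded on any relatively compact set), the function $|f_k|^{2c_k}e^{v_k}/\epsilon$ is, up to a smooth nonvanishing factor $e^{v_k}$ and the exponent $c_k$, comparable to $|f_k|^2/\epsilon^{1/c_k}$; more precisely one writes $|f_k|^{2c_k}e^{v_k} = |g_k|^2 h_k$ only formally, so instead one argues directly that $\tilde\chi_{k,\epsilon}$ and the associated $\dbar\tilde\chi_{k,\epsilon}/f_k$ have the same limiting behaviour as $\chi_k(|f_k|^2/\epsilon)$ when $\epsilon\to 0$, because the error terms are supported where $|f_k|^2\sim \epsilon^{1/c_k}$ and carry extra vanishing. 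The key point, already used implicitly in Section~\ref{sec:motivation}, is that $\dbar\tilde\chi_{k,\epsilon} = \chi_k'(|f_k|^{2c_k}e^{v_k}/\epsilon)\,\epsilon^{-1}\dbar(|f_k|^{2c_k}e^{v_k})$ and $\dbar(|f_k|^{2c_k}e^{v_k})/f_k = |f_k|^{2c_k}e^{v_k}(c_k\,\overline{f_k}\,df_k\cdot\text{(smooth)} + \dbar v_k)/f_k$, which on the support of $\chi_k'$ is $f_k^{-1}$ times a bounded form; so $\dbar\tilde\chi_{k,\epsilon}/f_k$ is, modulo terms converging to the same limit, of the form $(\dbar\psi_{k,\epsilon}/f_k)\wedge(\text{smooth})$ for an ordinary cut-off $\psi_{k,\epsilon}$ of $|f_k|^2$.

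Once each $P_k^\epsilon$ is rewritten (after factoring out smooth forms that do not affect admissible-path limits) as either $\psi_{k,\epsilon}/f_k$ or $\dbar\psi_{k,\epsilon}/f_k$ with $\psi_{k,\epsilon}=\chi_k(|f_k|^2/\epsilon)$, the statement becomes exactly the mixed-factor version of \eqref{eq:CH2}. For the purely residue factors this is \cite{LS}*{Theorem~2}; the presence of some factors of the form $\psi_{k,\epsilon}/f_k$ rather than $\dbar\psi_{k,\epsilon}/f_k$ is handled as in the proof of \cite{LS}*{Theorem~11}, where one notes that multiplication by $\psi_{k,\epsilon}$ (as opposed to taking $\dbar$) only improves the regularity of the estimates: the relevant bounds in \cite{LS}, obtained via resolution of singularities and an analysis of one-variable integrals $\int \chi(|z|^2/\epsilon)\,|z|^{2a}\cdots$, go through verbatim, and in fact the non-$\dbar$ factors contribute convergent, not distributional, pieces. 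I would therefore restate the needed estimates from \cite{LS}, indicate that after pulling back by a common log-resolution $\pi$ of $f_1\cdots f_r$ all $f_k$ become monomials, and observe that the admissible-path condition $\epsilon_k/\epsilon_{k+1}^q\to 0$ is precisely what is needed to compare the one-parameter limit with the iterated limit factor by factor, exactly as in \cite{LS}.

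The main obstacle is the bookkeeping in the first reduction step: one must be careful that the "smooth forms factored out" really are globally smooth and $\epsilon$-independent, and that the error terms coming from $\dbar v_k$ and from $\chi_k$ not being literally a function of $|f_k|^2$ genuinely converge to the \emph{same} limit along the admissible path, not merely to \emph{some} limit. Concretely, one expands $\dbar\tilde\chi_{k,\epsilon}/f_k = \dbar\psi_{k,\epsilon}/f_k \wedge A_k + \dbar\psi_{k,\epsilon}\wedge B_k$ where $A_k$ is a smooth invertible factor and $B_k$ is a smooth form, and one has to check that the second term, and all mixed products involving it, tend to $0$ — which holds because $\dbar\psi_{k,\epsilon}$ alone (no $1/f_k$) converges to $0$ as $\epsilon\to0$ and the admissible-path estimates of \cite{LS} control the remaining factors uniformly. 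I expect this to be the only genuinely technical part; after it, the proposition follows by a direct appeal to the (proof of the) cited results in \cite{LS}.
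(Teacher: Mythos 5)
Your reduction of the exponent $c_k$ to $c_k = 1$ by setting $\hat\chi_k(t)=\chi_k(t^{c_k})$ and rescaling $\hat\epsilon_k=\epsilon_k^{1/c_k}$ agrees with the paper, and you correctly note that the admissible-path condition is preserved by such a rescaling. But the way you propose to dispose of $v_k$ (and to reconcile distinct cut-off functions $\chi_k$) does not work as described and is where the real content of the proof lies.

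The decomposition you write, $\dbar\tilde\chi_{k,\epsilon}/f_k = (\dbar\psi_{k,\epsilon}/f_k)\wedge A_k + \dbar\psi_{k,\epsilon}\wedge B_k$ with $\psi_{k,\epsilon}=\chi_k(|f_k|^2/\epsilon)$ and $A_k,B_k$ smooth and $\epsilon$-independent, is simply false: $\tilde\chi_{k,\epsilon}$ is supported where $|f_k|^{2c_k}e^{v_k}\gtrsim\epsilon$ while $\psi_{k,\epsilon}$ is supported where $|f_k|^2\gtrsim\epsilon$, and the two regions and their annular boundary layers (the supports of $\dbar\tilde\chi_{k,\epsilon}$ and of $\dbar\psi_{k,\epsilon}$) differ in a way that cannot be absorbed into a fixed smooth multiplicative factor. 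In particular, there is no uniform rescaling of $\epsilon$ that removes $e^{v_k(x)}$, because $v_k$ depends on $x$. Your appeal to ``extra vanishing'' of error terms is not a proof and would, if pursued rigorously, require exactly the kind of resolution-of-singularities estimates you are trying to avoid; the limits in question are, by \cite{P}, extremely sensitive to the cut-off and the scaling, so ``having the same limiting behaviour'' must be argued, not asserted.

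The paper's proof takes a different and cleaner route: it does not try to reduce to $v_k=0$ or to a single $\chi$. After the exact $c_k$ reduction, it goes into the proof of \cite{LS}*{Theorem~11} (which already treats mixed products of $1/f_k$ and $\dbar(1/f_k)$ factors, so no ``mild extension'' is needed for that aspect) and notes two things: (i) where the unit $\xi$ appears after resolution of singularities, it may be replaced by $\xi$ times the pullback of $e^{v_k}$, which is still a smooth positive function, so every estimate goes through unchanged; and (ii) where a fixed $\chi$ appears in the expressions $\chi(|x^{\tilde\alpha_j}|^2\xi_j/\epsilon_{\nu(j)})$, one may replace it by the corresponding $\chi_j$, again without affecting the argument. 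That direct inspection of the \cite{LS} proof is precisely what your outline is missing; without it, the $v_k\neq 0$ case and the different-$\chi_k$ case are unproved.
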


Note that the difference between $\tilde\chi_{k,\epsilon}$ and
$\chi_{k,\epsilon}$ in \eqref{boman} is that we allow different cut-off functions
$\chi_k$ in $\tilde\chi_{k,\epsilon}$. 

For this result, it is crucial that the $v_k$ are smooth. Indeed, the proof below uses a change of variables involving $e^{v_k}$,
and this would not be possible if $v_k$ was just assumed to be a locally
bounded psh function.

\begin{proof}
If $\tilde \chi_{k,\epsilon} = \chi(|f_k|^2/\epsilon)$, i.e., when
$\chi_k=\chi$ for some cut-off function $\chi$, $c_k=1$ and $v_k=0$, then this indeed follows from \cite{LS}*{Theorem~11}.
To reduce to the case $c_k=1$, one lets $\hat{\chi}_k(t) = \chi_k(t^{c_k})$, which is also a cut-off function, $\hat{v}_k = v_k/c_k$ and $\hat{\epsilon}_k = \epsilon_k^{1/c_k}$, so that $\tilde\chi_{k,\epsilon_k} = \hat{\chi}_k(|f|^2e^{\hat{v}_k}/\hat{\epsilon}_k)$.
To allow for general $v_k$ and $\chi_k$, one just has to observe that the proof goes through in the same way in that situation. Indeed, 
to allow for the case that $v_k \not\equiv 0$, one just notices that in the beginning of the proof of \cite{LS}*{Theorem~11}, one may simply replace $\xi$
by $\xi$ times (the pullback to a resolution of singularities of) $e^{v_k}$. 
To allow for different $\chi_k$, in the proof, where $\chi_j^\epsilon = \chi(|x^{\tilde{\alpha}_j}|^2\xi_j/\epsilon_{\nu(j)})$
or $\chi_j^\epsilon = \chi(|y^{\tilde{\alpha}_j}|^2/\epsilon_{\nu(j)})$ appears, one just replaces $\chi$ in the right-hand side by $\chi_j$ and the proof will proceed in exactly the same way.
\end{proof}

\begin{proof}[Proof of Theorem ~\ref{paronmos}]
As above, let $\chi=\rho'\circ \log$, cf.\ Remark \ref{delsjon}. 
Since \eqref{eq:Amazon} is a local statement, we may assume that each
$\varphi_k$ is of the form \eqref{allihopa}. Moreover, after a
principalization and a resolution of singularities we may assume that
each $f_k$ is a single holomorphic function, cf.\
\cite{ABW}*{Section~4} and Remark ~\ref{hund}. 
By recursively applying the second part of Lemma ~\ref{socker} we have 
	\begin{equation*}
	\lim_{j_r'\rightarrow\infty}\cdots\lim_{j_1'\rightarrow\infty}\alpha^{(r)}_{j_r'}\wedge \dots \wedge\alpha^{(1)}_{j_1'}
	=[\theta_r + dd^c\varphi_r]^{m_r}_{\eta_r}\wedge \cdots \wedge
        [\theta_1 +dd^c\varphi_1]^{m_1}_{\eta_1}  
	\end{equation*}	
and thus it suffices to prove 
\begin{equation}\label{halsen}
	\lim_{\nu\rightarrow\infty}\alpha^{(r)}_{j_r(\nu)}\wedge \dots \wedge\alpha^{(1)}_{j_1(\nu)}
	=\lim_{j_r'\rightarrow\infty}\cdots\lim_{j_1'\rightarrow\infty}\alpha^{(r)}_{j_r'}\wedge
        \dots \wedge\alpha^{(1)}_{j_1'}. 
	\end{equation}

As above let $\epsilon_j=e^{-j}$, and let 
	\begin{equation*}
	P_{k,\ell,1}^j={\dbar (\chi_{k,\epsilon_j})^\ell}/{f_k}\hbox{ and\ }P_{k,\ell,2}^j={(\chi_{k,\epsilon_j})^\ell}/{f_k}
	\end{equation*}
for $k=1,\dotsc,r$, $\ell=1,\dotsc,m_k$. 
Since $\chi^\ell$ is a cut-off function
whenever $\chi$ is, it follows that $P^j_{k,\ell,i}$ are as in
Proposition ~\ref{LSvariant}. 
By  the first part of Lemma ~\ref{socker} there exist smooth forms $\Theta_{K,L,I}$ such that 
	\begin{equation}\label{eq:Madeira}
	\alpha^{(r)}_{j_r}\wedge \dots \wedge\alpha^{(1)}_{j_1}=
	\eta_r^{m_r}\wedge\dots\wedge\eta_1^{m_1}+ 
    \sum_{s=1}^r \sum_{K,L,I} \Theta_{K,L,I}\wedge P_{k_s,\ell_s,i_s}^{j_{k_s}}\wedge \dots \wedge P_{k_1,\ell_1,i_1}^{j_{k_1}}
	\end{equation}
where the inner sum is taken over all integer tuples $K=(k_1,\dotsc,k_s)$ with $1\leq k_1<\dots<k_s\leq r$,
all integer tuples $L=(\ell_1,\dotsc,\ell_s)$ with $1\leq \ell_\kappa\leq m_{k_\kappa}$, $\kappa=1,\dotsc, s$,
and all tuples $I=(i_1,\dots,i_s)$ with $i_\kappa \in \{1,2\}$, $\kappa=1,\dots,s$. 

Since $(j_1,\dotsc,j_r)\colon \N\rightarrow \R^r$ tends
to $\infty$ along an admissible path, then so does
$(j_{k_1},\dotsc,j_{k_s})\colon \N\rightarrow \R^s$, if
$K=(k_1,\dotsc,k_s)$ is as above, and so $(\epsilon_{k_1},\ldots,
\epsilon_{k_s})$ tends to $0$ along an admissible path.
Thus, by Proposition ~\ref{LSvariant} 
	\[
	\lim_{\nu\rightarrow\infty} P_{k_s,\ell_s,i_s}^{j_{k_s}(\nu)}\wedge \dots \wedge P_{k_1,\ell_1,i_1}^{j_{k_1}(\nu)} =
	\lim_{j_s'\rightarrow\infty}\cdots\lim_{j_1'\rightarrow\infty}
	P_{k_s,\ell_s,i_s}^{j_s'}\wedge \dots \wedge
        P_{k_1,\ell_1,i_1}^{j_1'}, 
	\]
and hence \eqref{halsen} follows in view of \eqref{eq:Madeira}. 
\end{proof}

\begin{remark}
With simple adaptations to the above proof, we get regularizations
also of the more general mixed Monge-Amp\`ere products \eqref{katta}. 
For instance, let $\ph_1, \ph_2, \ph_3$  be qpsh functions with analytic singularities, 
let $Z_2$ be the unbounded locus of $\ph_2$, and
let $(j_1,j_2,j_3)\colon \N\rightarrow \R^3$ be a sequence tending to $\infty$ along an admissible path. 
Then,
\begin{multline*}
    \lim_{\nu\rightarrow\infty}
    \big(dd^c (\rho_{j_{3}(\nu)} \circ \varphi_3)\big)^{m_3} \wedge(\rho_{j_2(\nu)}'\circ \ph_{2})\cdot\big(dd^c  (\rho_{j_1(\nu)}\circ \varphi_1)\big)^{m_1}\\
    = (dd^c \varphi_3)^{m_3}\wedge\1_{X \setminus Z_2} (dd^c \varphi_1)^{m_1}. 
\end{multline*}
\end{remark}

\begin{remark}
\label{rmk:Lucas}
It could appear natural in the situation of Theorem~\ref{huvud} to
consider one parameter limits like
\begin{equation} \label{badlimits}
        \lim_{j\rightarrow \infty}
        \big(dd^c(\rho_j\circ\ph_r)\big)^{m_r} \wedge\dots\wedge   \big(dd^c(\rho_j \circ\ph_1)\big)^{m_1}, 
\end{equation}
i.e., where all the $j_k$ are all equal to a single $j$. This would correspond to letting all the $\epsilon_k$ in 
Proposition~\ref{LSvariant} be equal to a single $\epsilon$. If $P_k^\epsilon$ are as in Proposition~\ref{LSvariant}, then
limits of expressions like 
 $P_r^{\epsilon_r} \wedge \dots \wedge P_1^{\epsilon_1}$
are very sensitive to how $(\epsilon_1,\dots,\epsilon_r)$ tends to $0$.
In fact, if we let
\begin{equation*}
    I(\mathbf{s}) := \lim_{\delta\to 0} P_r^{\delta^{s_r}} \wedge \dots \wedge P_1^{\delta^{s_1}},
\end{equation*}
where $\mathbf{s}=(s_1,\dots,s_r) \in \R_{> 0}^r$, 
then by \cite{P}*{Proposition~1}, there exist finitely many vectors $\mathbf{n}_i \in \mathbf{Q}^r$, $i=1,\dots,N,$ such that
$I(\mathbf{s})$ is well-defined and locally constant on $\R_{> 0}^r \setminus \cup \{ \mathbf{n}_i \cdot \mathbf{s} = 0 \}$. 
The case above with all $\epsilon_k$
equal to $\epsilon$ corresponds to when $\mathbf{s}=(1,\dots,1)$, and it could very well happen that $\mathbf{s}$ lies in one
of the hyperplanes $\{\mathbf{n}_i \cdot \mathbf{s} = 0 \}$, in which
case we would not know whether $I(1,\dots,1)$ is well-defined. Hence,
we do not know in general if the limit \eqref{badlimits} exists. 
\end{remark}

\begin{remark}
    Assume that we are in the situation of Theorem~\ref{huvud}, or
    more generally Theorem~\ref{paronmos}.
    By choosing $1<r_1<r_2<\dots<r_p=r$, we may divide $\{1,\dots,r\}$ into $p$
    blocks 
    \begin{equation*}
            \{1,\dots,r_1\},\{r_1+1,\dots,r_2\},\dots,\{r_{p-1}+1,\dots,r_p\}.
    \end{equation*}
    It could be natural to consider limits that tend to $\infty$ along admissible paths iteratively
    in each block, so that the left-hand side in Theorem~\ref{huvud} corresponds to the iterated limit
    when there is just a single block $\{1,\dots,r\}$, while the right-hand side corresponds to
    the limit when we have $r$ blocks $\{1\},\dots,\{r\}$.

In fact, in \cite{LS} certain generalized admissible paths are
considered that give regularization results like this for residue
currents. By small adaptations of our proofs to this situation we
would get results like 
   \begin{multline*}
    \lim_{\nu_p\to\infty}\cdots \lim_{\nu_1\to\infty} 
    \big(dd^c(\rho_{{j_{r_p}}(\nu_p)}\circ\ph_{r_p})\big)^{m_{r_p}} \wedge
    \cdots \wedge
    \big(dd^c(\rho_{j_{r_{p-1}+1}(\nu_p)}\circ\ph_{r_{p-1}+1})\big)^{m_{r_{p-1}+1}}
    \wedge \\
    \cdots \wedge 
    \big(dd^c(\rho_{j_{r_1}(\nu_1)}\circ\ph_{r_1})\big)^{m_{r_1}}
    \wedge\cdots \wedge  
\big(dd^c(\rho_{{j_1}(\nu_1)}\circ\ph_1)\big)^{m_1} \Big)
    = 
    (dd^c \varphi_r)^{m_r}\wedge\cdots\wedge (dd^c \varphi_1)^{m_1}
    . 
    \end{multline*}
if each $(j_{r_k+1}, \ldots, j_{r_{k+1}})$ tends to infinity along an
admissible path. 
\end{remark}

\section{Chern and Segre forms of metrics with analytic singularities}
\label{sec:Segre}

In \cite{LRSW}, we use generalized mixed Monge-Amp\`ere products to construct
Chern and Segre forms, or rather currents, for hermitian metrics on
holomorphic vector bundles that have analytic singularities in a
certain sense. 
In this section, we apply the results presented above to get an
approximation of these Chern and Segre currents by smooth forms in
the corresponding Chern and Segre classes. 

Let us briefly recall the construction in \cite{LRSW}; for details and
references we refer to that paper. 
Assume that $E\to X$ is a holomorphic vector bundle of rank $r$. 
Let us first consider the classical
setting and assume that $h$ is a smooth hermitian metric on $E$. 
Let $\pi : \P(E)\to X$ be the
projective bundle of lines in $E^*$. Then $h^*$ induces a metric on the
tautological line bundle $\Ok_{\P(E)}(-1)\subset \pi^* E^*$;  let
$e^{-\phi}$ be the dual metric on $L:=\Ok_{\P(E)}(1)$. 
If $h$ is Griffiths semipositive, then
$e^{-\phi}$ is a semipositive metric, i.e., the local weights $\phi$
are psh. 
The $k$th \emph{Segre form} can be defined as 
\begin{equation}\label{ny}
s_k(E,h):=(-1)^k\pi_*(dd^c\phi)^{k+r-1}.
\end{equation}
This definition coincides with the classical 
definition of Segre forms, which means that the total Segre form
$s(E,h)=1+s_1(E,h)+s_2(E,h)+\cdots$ is the multiplicative
inverse of the total
Chern form $c(E,h)=1+c_1(E,h)+c_2(E,h)+\cdots$. 

In \cite{LRSW} we considered Griffiths semipositive singular metrics $h$ on $E$ in the sense of
Berndtsson-P\u aun, \cite{BP}, such that the corresponding singular
metrics $e^{-\phi}$ on $L$ satisfy that the local weights $\phi$ are psh with analytic
singularities\footnote{Recall that in \cite{LRSW} we use the less
  restrictive definition of analytic singularities, cf.\ Remark ~\ref{rem:nonSmooth} above.}; we say that such $h$ have \emph{analytic
  singularities}. For these metrics we constructed Chern and Segre forms
by mimicking the smooth setting. 
Let $\theta$ be a first Chern form of a smooth metric $e^{-\psi}$ on $L$,
and let 
\begin{equation}\label{kanal}
s_k(E,h,\theta):=(-1)^k\pi_\ast [dd^c\phi]_\theta^{k+r-1}, 
\end{equation}
see Remark ~\ref{kaktus}; this is a closed normal $(k,k)$-current. 
Since $[dd^c\phi]_\theta^{m}=(dd^c\phi)^m$ where $h$ is smooth, cf.\
Remark ~\ref{sommaren}, it follows that $s_k(E,h,\theta)$ coincides
with the classical Segre form $s_k(E,h)$ where $h$ is
smooth. Moreover, by Proposition ~\ref{vinter}, 
$[dd^c\phi]_\theta^{m}$ is cohomologous to $\theta^m$, and thus $s_k(E,h,\theta)$ is in
the $k$th Segre class $s_k(E)$ of $E$, i.e., the class of the $k$th Segre
form of a smooth metric.

To construct Chern forms we defined products of the Segre forms
\eqref{kanal}. 
Let $E_1,\ldots, E_t$ be $t$ 
disjoint copies of $E$ and let $\varpi: Y\to X$ be the fiber product
$Y=\P(E_t)\times_X\cdots \times_X \P(E_1).$ Let $\phi_i$ and
$\theta_i$ denote the pullbacks to $Y$ of the metric and form on
$\P(E_i)$ corresponding to $\phi$ and $\theta$,
respectively. 
Now, for $k_1,...,k_t\geq 1$, we define 
\begin{equation*}
s_{k_t}(E,h,\theta)\wedge\cdots\wedge s_{k_1}(E,h,\theta)
:=
(-1)^k\varpi_* \big ( [dd^c
\phi_t]_{\theta_t}^{k_t+r-1}\wedge\cdots\wedge [dd^c
\phi_1]_{\theta_1}^{k_1+r-1}
\big ), 
\end{equation*}
where $k= k_1+\cdots + k_t$,  see Remark ~\ref{kaktus}, 
and  
\begin{equation} \label{eq:Chern}
 c_k(E,h,\theta) := \sum_{k_1+\cdots
   +k_t=k}(-1)^ts_{k_t}(E,h,\theta)\wedge\cdots\wedge
 s_{k_1}(E,h,\theta), 
\end{equation}
so that the total Chern form $1+c_1(E,h,\theta) + \cdots$ times the
total Segre form $1 + s_1(E, h, \theta)
+ \cdots$ equals $1$. 
As above, it follows from the construction that $c_k(E, h, \theta)$ coincides with the classical Chern form $c_k(E,
h)$ where $h$ is smooth and that it is in the $k$th Chern class $c_k(E)$ of
$E$, see \cite{LRSW}*{Theorem~1.1}. 
We also show that
$s_k(E, h,\theta)$ and $c_k(E,h,\theta)$ coincide with the Chern and Segre forms
for singular metrics defined by the first two authors and Raufi and
Ruppenthal in \cite{LRRS} when these are defined. Moreover, we show
that although the currents $s_k(E, h,
\theta)$ and $c_k(E,h,\theta)$ depend on the choice of $\theta$ in
general, the Lelong numbers at each point $x\in X$ are independent of
$\theta$.

\medskip

We want to use our regularization results 
to regularize these currents. 
Let $\rho_j$ be as in Definition~\ref{df:rho} and let 
\begin{equation*}
\alpha_{k,j}=\big (\theta+dd^c (\rho_j \circ \varphi)\big )^{k+r-1} 
\quad \text{ and } \quad 
\beta_{k,j} = (-1)^k \pi_* \alpha_{k,j}, 
\end{equation*}
where $\varphi$ is the qpsh function $\varphi=\phi-\psi$, cf. \eqref{herde} and Remark \ref{kaktus}.  
Then $\beta_{k,j}$ is a smooth form since it is the direct image of a
smooth form under a submersion. 
Moreover, clearly $\alpha_{k,j}$ is cohomologous to $\theta^{k+r-1}$
and thus $\beta_{k,j}\in s_k(E)$,
cf.\ \eqref{ny}. 
From Theorem~\ref{paron} we get the following regularization result. 
\begin{cor}
\label{cor:application}
Assume that we are in the situation above. 
If $(j_1,\dots,j_t) : \N \to \R^t$ tends to $\infty$ along an admissible path, then
	\begin{equation*}
\lim_{\nu \to \infty} \beta_{k_t,j_t(\nu)}\wedge \dots \wedge
\beta_{k_1,j_1(\nu)} = 
	s_{k_t}(E,h,\theta)\wedge\cdots\wedge s_{k_1}(E,h,\theta). 
	\end{equation*}
\end{cor}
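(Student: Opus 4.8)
The plan is to deduce Corollary~\ref{cor:application} from Theorem~\ref{paron} by pushing forward along the submersion $\varpi\colon Y\to X$ and interchanging the pushforward with the limit. First I would set up the relevant factors on the fiber product $Y=\P(E_t)\times_X\cdots\times_X\P(E_1)$: the pullbacks $\phi_i$ of the weight $\phi$ are qpsh functions with analytic singularities on $Y$, and $\varphi_i:=\phi_i-\psi_i$ are the corresponding qpsh functions, where $\psi_i$ is the pullback of the smooth weight $\psi$ and $\theta_i=dd^c\psi_i$ its Chern form. With $m_i:=k_i+r-1$, Theorem~\ref{paron} applied on $Y$ to $\varphi_1,\dots,\varphi_t$ and $\theta_1,\dots,\theta_t$ gives
\begin{equation*}
\lim_{\nu\to\infty}\big(\theta_t+dd^c(\rho_{j_t(\nu)}\circ\ph_t)\big)^{m_t}\wedge\cdots\wedge\big(\theta_1+dd^c(\rho_{j_1(\nu)}\circ\ph_1)\big)^{m_1}=[\theta_t+dd^c\ph_t]_{\theta_t}^{m_t}\wedge\cdots\wedge[\theta_1+dd^c\ph_1]_{\theta_1}^{m_1}.
\end{equation*}
By Remark~\ref{kaktus}, the right-hand side equals $[dd^c\phi_t]_{\theta_t}^{m_t}\wedge\cdots\wedge[dd^c\phi_1]_{\theta_1}^{m_1}$, and applying $(-1)^k\varpi_*$ to it yields, by definition, $s_{k_t}(E,h,\theta)\wedge\cdots\wedge s_{k_1}(E,h,\theta)$.

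Next I would identify the left-hand side after pushforward with $\beta_{k_t,j_t(\nu)}\wedge\cdots\wedge\beta_{k_1,j_1(\nu)}$. The subtlety is that $\beta_{k_i,j}=(-1)^{k_i}\pi_*\alpha_{k_i,j}$ is defined as a pushforward from $\P(E_i)$, whereas the product on $Y$ uses the pullbacks $\theta_i$, $\ph_i$; one must check that the smooth form $\big(\theta_i+dd^c(\rho_j\circ\ph_i)\big)^{m_i}$ on $Y$ is the pullback along the appropriate projection $Y\to\P(E_i)$ of $\alpha_{k_i,j}$ (this is immediate since $\ph_i$ and $\theta_i$ are themselves such pullbacks and $dd^c$ and $\rho_j\circ(\cdot)$ commute with pullback). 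Then, using the projection-formula/fiber-product relation $\varpi_*$ of a product of pullbacks along the coordinate projections equals the exterior-type product of the individual $\pi_*$'s (this is the standard identity underlying the definition of $s_{k_t}\wedge\cdots\wedge s_{k_1}$ in \cite{LRSW}, and it is here applied to \emph{smooth} forms, where it is elementary), one gets
\begin{equation*}
(-1)^k\varpi_*\Big(\big(\theta_t+dd^c(\rho_{j_t(\nu)}\circ\ph_t)\big)^{m_t}\wedge\cdots\wedge\big(\theta_1+dd^c(\rho_{j_1(\nu)}\circ\ph_1)\big)^{m_1}\Big)=\beta_{k_t,j_t(\nu)}\wedge\cdots\wedge\beta_{k_1,j_1(\nu)}.
\end{equation*}

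Finally I would pass to the limit $\nu\to\infty$. Since $\varpi$ is proper (indeed a submersion with compact fibers), pushforward of currents is continuous in the weak topology, so
\begin{equation*}
\lim_{\nu\to\infty}\beta_{k_t,j_t(\nu)}\wedge\cdots\wedge\beta_{k_1,j_1(\nu)}=\lim_{\nu\to\infty}(-1)^k\varpi_*\big(\cdots\big)=(-1)^k\varpi_*\big(\lim_{\nu\to\infty}(\cdots)\big),
\end{equation*}
and by the two identifications above this last expression equals $s_{k_t}(E,h,\theta)\wedge\cdots\wedge s_{k_1}(E,h,\theta)$, which is the assertion. I expect the main obstacle to be purely bookkeeping rather than analytic: carefully matching the fiber-product/pushforward conventions of \cite{LRSW} so that the smooth-form identity in the middle step holds on the nose, and confirming that the admissible-path hypothesis for $(j_1,\dots,j_t)$ on $\N\to\R^t$ is exactly what Theorem~\ref{paron} requires on $Y$ (it is, since the statement of Theorem~\ref{paron} is on an arbitrary complex manifold, here $Y$). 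The continuity of $\varpi_*$ and the fact that $\beta_{k,j}$ is smooth and lies in $s_k(E)$ have already been noted in the excerpt, so those require no further argument.
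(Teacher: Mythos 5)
Your proof is correct and follows essentially the same route as the paper: apply Theorem~\ref{paron} on the fiber product $Y$, translate via Remark~\ref{kaktus} (equation~\eqref{lakrits}) to the $[dd^c\phi_i]_{\theta_i}$ notation, push forward by the proper submersion $\varpi$ using weak continuity and the fiber-product pushforward identity (which the paper cites as \cite{LRSW}*{Lemma~6.3}), and identify the result with $\beta_{k_t,j_t(\nu)}\wedge\cdots\wedge\beta_{k_1,j_1(\nu)}$. The bookkeeping points you flag are exactly those the paper handles, so no gap.
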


In particular, in view of \eqref{eq:Chern}, it follows that $s_k(E,h,\theta)$ and $c_k(E,h,\theta)$ are given as
limits of smooth forms in the classes $s_{k}(E)$ and $c_k(E)$, respectively.  

\begin{proof}
Following the notation in \cite{LRSW}, let 
$\tilde\alpha_{k,j,i}$ be the form on $\P(E_i)$ corresponding to $\alpha_{k,j}$.
Moreover let $\alpha_{k,j,i}=\varpi_i^*\tilde \alpha_{k,j,i}$,
where $\varpi_i$ is the projection $Y\to \mathbf P(E_i)$.
By Theorem ~\ref{paron}, in view of \eqref{lakrits},  
\begin{multline}\label{forsta}
	[dd^c\phi_t]_{\theta_t}^{k_t+r-1}\wedge\cdots\wedge
        [dd^c\phi_1]_{\theta_1}^{k_1+r-1} 
= \lim_{\nu\to\infty} \alpha_{k_t,j_t(\nu),t}\wedge\cdots\wedge
\alpha_{k_1,j_1(\nu),1}
=\\
= \lim_{\nu\to\infty} \varpi_t^*\tilde\alpha_{k_t,j_t(\nu),t}\wedge\cdots\wedge
\varpi_1^* \alpha_{k_1,j_1(\nu),1}. 
    \end{multline}
Let $\pi_i$ be the projection $\P(E_i)\to X$. 
Applying $\varpi_*$ to \eqref{forsta}, using that $\beta_{k,j} = (-1)^k(\pi_i)_*\tilde\alpha_{k,j,i}$ and
that 
\[
\varpi_* (\varpi_t^*\gamma_t\wedge\cdots\wedge \varpi_1^* \gamma_1)=
(\pi_t)_*\gamma_t\wedge\cdots\wedge(\pi_1)_* \gamma_1
\]
for all smooth forms $\gamma_1,\ldots, \gamma_t$ on $\P(E_1), \ldots,
\P(E_t)$, respectively, see, e.g., \cite{LRSW}*{Lemma 6.3}, 
we obtain 
\begin{multline*}
s_{k_t}(E,h,\theta)\wedge\cdots\wedge s_{k_1}(E,h,\theta)
=
\lim_{\nu\to\infty} \varpi_* \big (\varpi_t^*\tilde\alpha_{k_t,j_t(\nu),t}\wedge\cdots\wedge
\varpi_1^* \tilde\alpha_{k_1,j_1(\nu),1} \big ) 
=\\
\lim_{\nu\to\infty} (\pi_t)_*\tilde\alpha_{k_t,j_t(\nu),t}\wedge\cdots\wedge
(\pi_1)_* \tilde\alpha_{k_1,j_1(\nu),1} 
=
\lim_{\nu \to \infty} \beta_{k_t,j_t(\nu)}\wedge \dots \wedge
\beta_{k_1,j_1(\nu)}.
\end{multline*}
\end{proof}

\begin{bibdiv}
\begin{biblist}

\bib{A}{article}{
    AUTHOR = {Andersson, Mats},
     TITLE = {Residues of holomorphic sections and {L}elong currents},
   JOURNAL = {Ark. Mat.},
    VOLUME = {43},
      YEAR = {2005},
    NUMBER = {2},
     PAGES = {201--219},
      ISSN = {0004-2080},
}

\bib{ABW}{article}{
   author={{Andersson}, Mats},
   author={ {B{\l}ocki}, Zbigniew},
   author={Wulcan, Elizabeth},
   title={On a Monge-Amp\`ere operator for plurisubharmonic functions with analytic singularities},
      JOURNAL = {Indiana Univ.\ Math. J.},
    VOLUME = {68},
      YEAR = {2019},
    NUMBER = {4},
     PAGES = {1217--1231},
}

\bib{AESWY}{article}{
  AUTHOR = {Andersson, Mats}, 
author = {Eriksson, Dennis} ,
author =  {{S}amuelsson {}Kalm, H\aa kan}, 
author = {Wulcan, Elizabeth}, 
author = {Yger, Alain},
title = {Global representation of Segre numbers by Monge-Amp\`ere
  products}, 
  eprint={arXiv:1812.03054 [math.CV]},
   url={http://arxiv.org/abs/1812.03054}
}

\bib{ASWY}{article}{
  AUTHOR = {Andersson, Mats}, 
author =  {{S}amuelsson {}Kalm, H\aa kan}, 
author = {Wulcan, Elizabeth}, 
author = {Yger, Alain},
     TITLE = {Segre numbers, a generalized {K}ing formula, and local
              intersections},
   JOURNAL = {J. Reine Angew. Math.},
    VOLUME = {728},
      YEAR = {2017},
     PAGES = {105--136},
}

\bib{AW2}{article}{
   author={Andersson, Mats},
   author={Wulcan, Elizabeth},
   title={Decomposition of residue currents},
   journal={J. Reine Angew. Math.},
   volume={638},
   date={2010},
   pages={103--118},
}

\bib{AW}{article}{
 AUTHOR = {Andersson, Mats},
 AUTHOR = {Wulcan, Elizabeth},
     TITLE = {Green functions, {S}egre numbers, and {K}ing's formula},
   JOURNAL = {Ann. Inst. Fourier (Grenoble)},
    VOLUME = {64},
      YEAR = {2014},
    NUMBER = {6},
     PAGES = {2639--2657},
}

\bib{AW3}{article}{
 AUTHOR = {Andersson, Mats},
 AUTHOR = {Wulcan, Elizabeth},
     TITLE = {Direct images of semi-meromorphic currents},
   JOURNAL = {Ann. Inst. Fourier (Grenoble)},
    VOLUME = {68},
      YEAR = {2018},
    NUMBER = {2},
     PAGES = {875--900},
}

\bib{BP}{article}{
   author={Berndtsson, Bo},
   author={P\u aun, Mihai},
   title={Bergman kernels and the pseudoeffectivity of relative canonical
   bundles},
   journal={Duke Math. J.},
   volume={145},
   date={2008},
   number={2},
   pages={341--378},
}

\bib{BT1}{article}{
   author={Bedford, Eric},
   author={Taylor, B. A.},
   title={The Dirichlet problem for a complex Monge-Amp\`ere equation},
   journal={Invent. Math.},
   volume={37},
   date={1976},
   number={1},
   pages={1--44},
   issn={0020-9910},
}

\bib{BT2}{article}{
   author={Bedford, Eric},
   author={Taylor, B. A.},
   title={A new capacity for plurisubharmonic functions},
   journal={Acta Math.},
   volume={149},
   date={1982},
   number={1-2},
   pages={1--40},
}

\bib{BT3}{article}{
   AUTHOR = {Bedford, Eric}, 
author = {Taylor, B. A.},
     TITLE = {Fine topology, \v{S}ilov boundary, and {$(dd^c)^n$}},
   JOURNAL = {J. Funct. Anal.},
    VOLUME = {72},
      YEAR = {1987},
    NUMBER = {2},
     PAGES = {225--251},
 }

\bib{B}{article}{
   author={ {B{\l}ocki}, Zbigniew},
    title={On the complex Monge-Amp\`ere operator for quasi-plurisubharmonic functions with analytic singularities},
    journal={Bull. London Math. Soc.},
volume = {51},
year = {2019},
pages = {431--435},
}

\bib{BEGZ}{article}{
   AUTHOR = {Boucksom, S\'{e}bastien}, 
author = {Eyssidieux, Philippe}, 
author = {Guedj, Vincent}, 
author = {Zeriahi, Ahmed},
     TITLE = {Monge-{A}mp\`ere equations in big cohomology classes},
   JOURNAL = {Acta Math.},
    VOLUME = {205},
      YEAR = {2010},
    NUMBER = {2},
     PAGES = {199--262},
}

\bib{CH}{book}{
    AUTHOR = {Coleff, Nicolas R.},
    Author ={ Herrera, Miguel E.},
     TITLE = {Les courants r\'{e}siduels associ\'{e}s \`a une forme m\'{e}romorphe},
    SERIES = {Lecture Notes in Mathematics},
    VOLUME = {633},
 PUBLISHER = {Springer, Berlin},
      YEAR = {1978},
     PAGES = {x+211},
      ISBN = {3-540-08651-X},
}

\bib{Dem2}{article}{
   author={{D}emailly, Jean-Pierre},
     TITLE = {Singular {H}ermitian metrics on positive line bundles},
 BOOKTITLE = {Complex algebraic varieties ({B}ayreuth, 1990)},
    SERIES = {Lecture Notes in Math.},
    VOLUME = {1507},
     PAGES = {87--104},
 PUBLISHER = {Springer, Berlin},
      YEAR = {1992},
       URL = {https://doi.org/10.1007/BFb0094512},
}

\bib{Dem}{article}{
   author={Demailly, Jean-Pierre},
   title={Monge-Amp\`ere operators, Lelong numbers and intersection theory},
   conference={
      title={Complex analysis and geometry},
   },
   book={
      series={Univ. Ser. Math.},
      publisher={Plenum, New York},
   },
   date={1993},
   pages={115--193},
}

\bib{LRRS}{article}{
    AUTHOR = {L\"{a}rk\"{a}ng, Richard}, 
author = {Raufi, Hossein}, 
author = {Ruppenthal, Jean}, 
author = {Sera, Martin},
     TITLE = {Chern forms of singular metrics on vector bundles},
   JOURNAL = {Adv. Math.},
    VOLUME = {326},
      YEAR = {2018},
     PAGES = {465--489},
}

\bib{LRSW}{article}{
author={{L{\"a}rk{\"a}ng}, Richard},
author={{Raufi}, Hossein},
author={{Sera}, Martin},
author={{Wulcan}, Elizabeth}, 
title={Chern forms of hermitian metrics with analytic singularities on vector bundles},
status={preprint},
   eprint={arXiv:1802.06614[math.CV]},
   url={http://arxiv.org/abs/1802.06614}
}

\bib{LS}{article}{
   author={L\"{a}rk\"{a}ng, Richard},
   author={{S}amuelsson {}Kalm, H\aa kan},
   title={Various approaches to products of residue currents},
   journal={J. Funct. Anal.},
   volume={264},
   date={2013},
   number={1},
   pages={118--138},
}

\bib{P}{article}{
   author={Passare, Mikael},
   title={A calculus for meromorphic currents},
   journal={J. Reine Angew. Math.},
   volume={392},
   date={1988},
   pages={37--56},
}

\end{biblist}
\end{bibdiv}

\end{document}